     % !TeX spellcheck = en_US
\documentclass[reqno,11pt,oneside]{amsart}
\usepackage[utf8]{inputenc}
\usepackage{amsmath}
\usepackage{amsfonts}
\usepackage{amssymb}
\usepackage{graphicx}
\usepackage{verbatim}
\usepackage{mathrsfs}
\usepackage{upref,amsthm,amsxtra,exscale}
\usepackage{cite}
\usepackage{dsfont}
\usepackage[
  left=3.5cm,
  right=2.5cm,
  top=4cm,
  bottom=4cm
]{geometry}
\usepackage[colorlinks=true,urlcolor=blue,
citecolor=red,linkcolor=blue,linktocpage,pdfpagelabels,
bookmarksnumbered,bookmarksopen]{hyperref}
\usepackage{tikz}

\usepackage{fullpage}

\newtheorem{theorem}{Theorem}[section]
\newtheorem{corollary}[theorem]{Corollary}

\newtheorem{lemma}[theorem]{Lemma}
\newtheorem{proposition}[theorem]{Proposition}

\numberwithin{equation}{section}

% Mónica's def

\def\r{\mathbb{R}}
\def\rn{\mathbb{R}^n}
\def\z{\mathbb{Z}}
\def\z2{\mathbb{Z}_2}

\def\sn{\mathbb{S}^n}

\def\eps{\varepsilon}
\def\rh{\rightharpoonup}

\def\im{\int_{M}}
\def\idm{\int_{\partial M}}
\def\irn{\int_{\rn}}
\def\vp{\varphi}

\def\vr{\varrho}

\def\cA{\mathcal{A}}

\def\cC{\mathcal{C}}

\def\cE{\mathcal{E}}

\def\cN{\mathcal{N}}

\def\cP{\mathcal{P}}

\def\cZ{\mathcal{Z}}

\def\tilde{\widetilde}
\def\d{\,\mathrm{d}}
\def\dv{\,\mathrm{dv}}

\def\dist{\mathrm{dist}}

  %for comments
  %for comments

\begin{document}

\title[Sign-changing solutions to the Yamabe problem on manifolds with boundary]
{Sign-changing solutions to the Yamabe problem on manifolds with boundary}

\author{Mónica Clapp}
\address[Mónica Clapp]{Instituto de Matemáticas,
Universidad Nacional Autónoma de México,
Campus Juriquilla,
76230 Querétaro, Qro., Mexico}
\email{monica.clapp@im.unam.mx}

\author{Benedetta Pellacci}
\address[Benedetta Pellacci]{Dipartimento di Matematica e Fisica,
Universit\`a della Campania ``Luigi Vanvitelli'',
Viale Lincoln 5,
81100 Caserta, Italy}
\email{benedetta.pellacci@unicampania.it}

\author{Angela Pistoia}
\address[Angela Pistoia]{Dipartimento SBAI, Sapienza Universit\`a di Roma,
via Antonio Scarpa 16, 00161 Roma, Italy }
\email{angela.pistoia@uniroma1.it}

\thanks{A. Pistoia  is partially supported by  the MUR-PRIN-20227HX33Z
  ``Pattern formation in nonlinear phenomena'' and  partially by
  INDAM-GNAMPA project ``Problemi di doppia curvatura su variet\`a a
  bordo e legami con le EDP di tipo ellittico''. 
 B. Pellacci is partially supported by  the MUR-PRIN-20227HX33Z
  ``Pattern formation in nonlinear phenomena'' and  partially by
  INDAM-GNAMPA project ``Problemi di ottimizzazione in PDEs da modelli biologici''.}

\subjclass[2010]{53C21, 35J60, 58J60}
\keywords{Escobar problem, Yamabe problem, manifolds with boundary, nodal solutions, conformal geometry, variational methods.
}
 \begin{abstract}
	Let $(M, g)$ be a compact Riemannian manifold with boundary. The Yamabe problem concerning the existence of a metric conformally equivalent to $g$ having constant scalar curvature on $M$ and constant mean curvature on its boundary is equivalent, in analytic terms, to finding a  positive solution to a nonlinear boundary-value problem with critical growth. While the existence of positive solutions to this problem is by now well understood, the existence of sign-changing (nodal) solutions remains largely open.
 
In this work we establish the existence of least-energy sign-changing solutions when the manifold  is positive and the mean curvature of the boundary    is a non-negative constant. More precisely, we prove that if $n\ge7$ and $M$ has a nonumbilic boundary point, then the problem admits least-energy nodal solutions. 
Our approach is variational and relies on the analysis of suitable conformal invariants and sharp energy estimates.

\end{abstract}
\maketitle

\section{Introduction}

Let $(M,g)$ be a compact smooth Riemannian manifold with boundary of dimension $n\geq 3$.  We consider the problem
\begin{equation}\label{problem}
\begin{cases}
 -\Delta u + c_nRu= a_n|u|^\frac{4}{n-2}u &\text{on \ } M,\\
 \quad\frac{\partial u}{\partial\nu} + d_nhu= b|u|^\frac{2}{n-2}u &\text{on \ }\partial M,
\end{cases}
\end{equation}
where $\Delta=\Delta_g$ is the Laplacian, $R=R_g$ is the scalar curvature of $M$, $h=h_g$ is the mean curvature of its boundary and $\nu=\nu_g$ is the outward unit normal on $\partial M$ with respect to the metric $g$, $a_n:=n(n-2)$, $c_n:=\frac{n-2}{4(n-1)}$, $d_n:=\frac{n-2}{2}$ and $b\in\r$.

A positive solution $u$ to this problem gives rise to a conformal metric $\tilde{g}:=u^\frac{4}{n-2}g$ having constant scalar curvature $c_n^{-1}a_n$ on $M$ and constant mean curvature $d_n^{-1}b$ on its boundary.

When $\partial M=\emptyset$ the problem \eqref{problem} reduces to the well-known Yamabe problem, a higher dimensional analogue of the uniformization theorem in complex analysis. It was solved through the combined efforts of Yamabe \cite{yamabe}, Trudinger \cite{trudinger}, Aubin \cite{aubin} and Schoen \cite{schoen} who proved the existence of a positive solution $u$ to the equation
\begin{equation}\label{eq:yamabe}
-\Delta u + c_nRu=|u|^\frac{4}{n-2}u \qquad\text{on \ } M,
\end{equation}
for any closed  Riemannian manifold $M$ of dimension $n\geq 3$. A comprehensive analysis was later provided by Lee and Parker in \cite{lp}.

The natural extension \eqref{problem} of the Yamabe problem to manifolds with boundary was initially studied by Cherrier and Escobar. In \cite{che}, Cherrier gave a condition for the existence of a positive solution, while Escobar established its existence for a large class of manifolds, first in the case of minimal boundary $b=0$ in \cite{e1}, and then for small positive and negative values of $b$ in \cite{e4}. Subsequently, Han and Li showed in \cite{hl0,hl} that, in many cases, a positive solution exists for any $b\in\r$.

Regarding the existence of nodal solutions (with sign change), there are several results for the Yamabe equation \eqref{eq:yamabe}. When $M$ is the standard sphere $\mathbb{S}^n$, W.Y. Ding established the existence of infinitely many sign-changing solutions to \eqref{eq:yamabe} in \cite{ding}. Other types of nodal solutions on $\mathbb{S}^n$ were exhibited in \cite{dmpp1,dmpp2,c,cfs,mm}.  In \cite{fp} Fernández and Petean showed that there is a solution with precisely $\ell$ nodal domains for every $\ell\geq 2$. For closed manifolds with suitable symmetries existence of nodal solutions to \eqref{eq:yamabe} has been established in \cite{cf,cp}. Nodal solutions on products were obtained in \cite{h,p}.

On an arbitrary closed Riemannian manifold of dimension $n\geq 11$, which is not locally conformally flat, the existence of a nodal solution to the Yamabe problem \eqref{eq:yamabe} was shown by Ammann and Humbert in \cite{ah}. They introduced a conformal invariant, called  the second Yamabe invariant, defined in terms of the second eigenvalue of the conformal Laplacian, and showed that on every such manifold there exists a function $u$ that realizes this invariant. This function necessarily changes sign, so the expression $\tilde{g}:=|u|^{4/(n-2)}g$ is not a metric on $M$. In \cite{ah} it is called a \emph{generalized metric}. This result was extended in \cite[Theorem 1.2$(v)$]{cpt} to manifolds of dimension $10$ that satisfy an additional assumption. Every function that realizes the second Yamabe invariant has least energy among all sign-changing solutions. The blow-up behavior of sequences of least energy sign-changing solutions of to \eqref{eq:yamabe} was described by Premoselli and Vétois in \cite{pv}. They also showed in \cite{pv2} that in dimensions $3$ to $10$, the second Yamabe invariant is not attained for any metric on the sphere that is sufficiently close to the standard one. 

In the present work, we are concerned with the existence of nodal solutions to the problem \eqref{problem}.

We assume that $M$ is \emph{positive}, this means that the first eigenvalue satisfies
\begin{align} \label{eq:M positive}
\lambda_1(M)&:=\inf_{v\in H^1(M)\smallsetminus\{0\}}\frac{\mathscr{Q}v}{\im v^2\dv}>0,
\end{align}
where $\mathscr{Q}$ is the quadratic form on $H^1(M)$ given by
\begin{equation}\label{eq:quadratic form}
\mathscr{Q}u:=\im(|\nabla u|^2+c_nRu^2)\dv+\idm d_nhu^2\d \sigma,
\end{equation}
and $\dv$ and $\d \sigma$ are the Riemannian measures on $M$ and $\partial M$ induced by the metric $g$. 

Our main result is the following.

\begin{theorem} \label{thm:main}
If $b\geq 0$, $n\geq 7$ and $M$ has a nonumbilic point $\xi\in\partial M$, then the problem \eqref{problem} has a least energy sign-changing solution.
\end{theorem}

The strategy to prove Theorem \ref{thm:main} is as follows. If $u\in H^1(M)$ is a nontrivial solution to \eqref{problem}, then, multiplying that problem by $u$ and integrating by parts, shows that $u$ belongs to the set
$$\cN:=\Big\{u\in H^1(M): u\neq 0 \text{ and }\mathscr{Q} u=a_n\im|u|^\frac{2n}{n-2}\dv+b\idm |u|^\frac{2(n-1)}{n-2}\d \sigma\Big\}.$$
If $u$ changes sign, multiplying \eqref{problem} by the positive and negative parts $u^\pm$ of $u$ shows that $u$ belongs to
$$\cE:=\{u\in\cN:u^+\in\cN\text{ and }u^-\in\cN\}.$$ 
We define
$$\mu_1(M):=\inf_{u\in\cN}J(u)\qquad\text{and}\qquad\mu_2(M):=\inf_{u\in\cE}J(u),$$
where $J$ is the energy functional associated with problem \eqref{problem} defined in \eqref{eq:J} below. These are conformal invariants (see Proposition \ref{prop:mu is invariant}). The first coincides with the mountain-pass value of $J$, defined in \cite[p.812]{hl}. Han and Li showed in \cite{hl0,hl} that, for a large class of manifolds, this value is attained at a positive solution of \eqref{problem} for each $b\in\r$, thus extending earlier results by Escobar \cite{e1,e4}. When $b\geq 0$, our Theorem \ref{thm:mu_2}, stated below, gives a condition for $\mu_2(M)$ to be attained by a solution of \eqref{problem} that changes sign. Its Corollary \ref{cor:mu_2a} provides a criterion to verify this condition in terms of a linear combination of two test functions. We show that a positive least energy solution to \eqref{problem} and the test function introduced by Han and Li in \cite[(1.8)]{hl} fulfill this criterion. 

To prove Theorem \ref{thm:mu_2}, we first show that, if $b\geq 0$, a sufficiently small neighborhood of the cone of positive (and negative) functions in $H^1(M)$ is strictly positively invariant under the negative gradient flow of $J$. This allows us to generate a Palais-Smale sequence for $J$ at a positive distance from both cones. Then, in order to show that the sequence converges, we characterize the loss of compactness below a suitable energy level. The case $b<0$ cannot be treated using this approach because the invariance of the positive and negative cones is not guaranteed. The existence of a sign changing solution to \eqref{problem} for $b<0$ remains an interesting and challenging open problem.

For $b=0$ the existence of a nodal solution to problem \eqref{problem} was established by Ho and Pyo in \cite[Theorems 1.3,  1.5 and 1.6]{hp} whenever $n\geq 7$ and the Weyl tensor does not vanish identically on $\partial M$. A similar result for the scalar flat problem (obtained by replacing $a_n$ with $0$ and setting $b=1$ in equations \eqref{problem}) was proved by Ho, Lee and Shin in \cite[Theorems 3.1, 5.2 and 6.1]{hls}. However, we emphasize that the approach developed here differs from those adopted in the aforementioned works. Results on the existence of positive solutions in the scalar flat case can be found in \cite{e2,mn}.

The paper is organized as follows. In Section \ref{sec:setting} we describe the variational setting for problem \eqref{problem} and prove Theorem \ref{thm:mu_2}. In Section \ref{sec:test function} we compute the energy of the test function and prove Theorem \ref{thm:main}. The result that describes the loss of compactness is proved in the appendix.

\section{The variational setting}
\label{sec:setting}

We denote the standard norm of $u$ in $H^1(M)$ by
$$\|u\|:=\left(\im(|\nabla u|^2+u^2)\dv\right)^{1/2}$$
and set $p:=\frac{2n}{n-2}$ and $q:=\frac{2(n-1)}{n-2}$. These are the critical exponents for the Sobolev embedding $H^1(M)\hookrightarrow L^p(M)$ and the Sobolev trace embedding $H^1(M)\hookrightarrow L^q(\partial M)$, $u\mapsto u|_{\partial M}$. As shown by Escobar in \cite[Section 1]{e2}, if $M$ is positive, then
\begin{equation}\label{eq:escobar1}
\bar{\lambda}_1(M):=\inf_{\substack{u\in H^1(M) \\ u|_{\partial M}\neq 0}}\frac{\mathscr{Q}u}{\idm |u|^2\d \sigma}>0.
\end{equation}
Since $\lambda_1(M)>0$, $\bar{\lambda}_1(M)>0$ and the trace map $H^1(M)\to L^2(\partial M)$, $u\mapsto u|_{\partial M}$, is linear and continuous, there exist $C_1>0$ and $C_2>0$ such that
\begin{equation} \label{eq:Q_norm}
C_1\|u\|^2 \leq \mathscr{Q}u \leq C_2\|u\|^2\qquad\text{for all \ }u\in H^1(M).
\end{equation}

The solutions to the problem \eqref{problem} are the critical points of the $\cC^2$-functional $J:H^1(M)\to\r$ given by
\begin{equation}\label{eq:J}
J(u):=\frac{1}{2}\mathscr{Q}u - \frac{a_n}{p}\im |u|^p\dv - \frac{b}{q}\idm |u|^q\d \sigma,
\end{equation}
whose derivative at $u$ is
\begin{equation}\label{eq:derivative}
J'(u)v=\im(\nabla u\cdot\nabla v+c_nRuv)\dv+\idm d_nhuv\d \sigma - a_n\im |u|^{p-2}uv \dv - b\idm |u|^{q-2}uv\d\sigma,
\end{equation}
for every $v\in H^1(M)$. The nontrivial solutions to \eqref{problem} belong to the set
\begin{align*}
\cN&:=\{u\in H^1(M): u\neq 0 \text{ and }J'(u)u=0\} \\
&\,=\Big\{u\in H^1(M): u\neq 0 \text{ and }\mathscr{Q} u=F(u)\Big\},
\end{align*}
where 
\begin{equation}\label{eq:F}
F(u):=a_n\im |u|^p \dv + b\idm|u|^q\d\sigma.
\end{equation}
$\cN$ has the following properties.

\begin{lemma}\label{lem:nehari}
\begin{itemize}
\item[$(a)$]There exists $c_0>0$ such that $\|u\|\geq c_0$ for all $u\in\cN$.
\item[$(b)$]$\cN$ is a Hilbert submanifold of class $\cC^2$ of $H^1(M)$ and a natural constraint for $J$.
\item[$(c)$]If $u\in H^1(M)\smallsetminus\{0\}$, then there exists a unique $t_u\in(0,\infty)$ such that $t_uu\in\cN$. The function $t\mapsto J(tu)$ is strictly increasing in $[0,t_u]$ and strictly decreasing in $[t_u,\infty)$.
\end{itemize}
\end{lemma}

\begin{proof}
$(a):$ \ If $u\in\cN$, then, by \eqref{eq:Q_norm} and the Sobolev embeddings, there is $c_1>0$ such that
$$\mathscr{Q}u=a_n\im |u|^p \dv + b\idm|u|^q\d\sigma\leq c_1\Big((\mathscr{Q}u)^{p/2}+(\mathscr{Q}u)^{q/2}\Big).$$
Therefore, $\mathscr{Q}u\geq c_0>0$ for all $u\in\cN$, and the statement $(a)$ follows from \eqref{eq:Q_norm}.

$(b):$ \ It follows from $(a)$ that $\cN$ is a closed subset of $H^1(M)$. The function $\Psi(u):=\mathscr{Q} u-F(u)$ is of class $\cC^2$ on $H^1(M)$ and $\cN=\Psi^{-1}(0)\smallsetminus\{0\}$. Since
\begin{align}\label{eq:psi}
\Psi'(u)u&=2\mathscr{Q}u-F'(u)u=2\mathscr{Q}u-pa_n\im |u|^p\dv-qb\idm |u|^q\d \sigma \nonumber\\
&\leq 2\mathscr{Q}u-qF(u) =(2-q)\mathscr{Q}u<0\qquad\text{for every \ }u\in\cN,
\end{align}
$0$ is a regular value of $\Psi$. This shows that $\cN$ is a Hilbert submanifold of class $\cC^2$ of $H^1(M)$. Furthermore, \eqref{eq:psi} shows that $u$ does not belong to the tangent space $T_u\cN$ of $\cN$ at $u$. Hence, $H^1(M)=T_u\cN\oplus\r u$. So, if $u\in\cN$ and $J'(u)v=0$ for every $v\in T_u\cN$ then, as $J'(u)u=0$, we have that $J'(u)v=0$ for every $v\in H^1(M)$. This shows that every critical point of the restriction of $J$ to $\cN$ is a critical point of $J$, as claimed.

$(c):$ \ Let $u\in H^1(M)\smallsetminus\{0\}$ and $J_u:[0,\infty)\to\r$ be given by
$$J_u(t):=J(tu)=\frac{\mathscr{Q}u}{2} t^2- a_ut^p-b_ut^q,\quad\text{with \ }a_u:=\frac{a_n}{p}\im|u|^p\dv, \ b_u:=\frac{b}{q}\idm|u|^q\d\sigma.$$
As $\mathscr{Q}u>0$ and $a_u>0$, $J_u$ has a unique critical point $t_u$ in $(0,\infty)$ which is a global maximum. Note that, if $t\in(0,\infty)$, then $tu\in\cN$ iff $t_u$ is a critical point of $J_u$. This completes the proof.
\end{proof}

Set
$$\mu_1(M):=\inf_{u\in\cN}J(u).$$

\begin{lemma}\label{lem:positive}
\begin{itemize}
\item[$(i)$]$\mu_1(M)>0$.
\item[$(ii)$]If $J(u)<2\mu_1(M)$ and $J'(u)=0$, then $u$ does not change sign.
\item[$(iii)$]If $\mu_1(M)=J(u)$ for some $u\in\cN$, then $|u|$ is a solution of \eqref{problem}, $|u|$ is smooth up to the boundary and $|u|>0$ on $M$. 
\end{itemize}
\end{lemma}

\begin{proof}
$(i):$ \ Using Lemma \ref{lem:nehari}$(a)$ we see that
$$J(u)\geq\frac{1}{2}\mathscr{Q}u-\frac{1}{q} F(u)=\frac{q-2}{2q}\mathscr{Q}u\geq\frac{q-2}{2q}c_0>0,$$
for all $u\in\cN$, and the statement follows.

$(ii):$ \ Let $u^+:=\max\{u,0\}$ and $u^-:=\min\{u,0\}$. Then $J'(u^\pm)u^\pm=J'(u)u^\pm=0$. If $u^+\neq 0$ and $u^-\neq 0$, then $u^\pm\in\cN$ and, therefore, $J(u)=J(u^+)+J(u^-)\geq 2\mu_1(M)$, a contradiction.

$(iii):$ \ As $\mu_1(M)=J(|u|)$, $|u|$ is a weak solution of \eqref{problem}. The smoothness of $|u|$ follows from standard regularity arguments, and the maximum principle and the boundary point lemma imply that $|u|>0$ on $M$.
\end{proof}

If $u$ is a sign-changing solution to \eqref{problem}, replacing $v$ with $u^+:=\max\{u,0\}$ and $u^-:=\min\{u,0\}$ in \eqref{eq:derivative} yields $J'(u^\pm)u^\pm=0$. Since $u^+\neq 0$ and $u^-\neq 0$, we have that $u$ belongs to the set
$$\cE:=\{u\in\cN:u^+\in\cN\text{ and }u^-\in\cN\}.$$
Let
$$\mu_2(M):=\inf_{u\in\cE}J(u).$$

\begin{proposition} \label{prop:mu is invariant}
The numbers $\mu_1(M)$ and $\mu_2(M)$ are conformal invariants.
\end{proposition}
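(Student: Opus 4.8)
The plan is to verify that under a conformal change of metric the relevant quantities transform in a way that leaves the infima $\mu_1$ and $\mu_2$ unchanged. Let $\tilde g = \rho^{4/(n-2)} g$ for some smooth positive function $\rho$ on $M$. The starting point is the well-known conformal covariance of the pair (conformal Laplacian, boundary conformal operator): for $u \in H^1(M)$ one has $\mathscr{Q}_{\tilde g}(u) = \mathscr{Q}_g(\rho u)$, which comes from the transformation laws $-\Delta_{\tilde g} v + c_n R_{\tilde g} v = \rho^{-(n+2)/(n-2)}\big(-\Delta_g(\rho v) + c_n R_g (\rho v)\big)$ on $M$ and $\partial_{\nu_{\tilde g}} v + d_n h_{\tilde g} v = \rho^{-n/(n-2)}\big(\partial_{\nu_g}(\rho v) + d_n h_g (\rho v)\big)$ on $\partial M$, combined with $\dv_{\tilde g} = \rho^{2n/(n-2)}\dv_g$ and $\d\sigma_{\tilde g} = \rho^{2(n-1)/(n-2)}\d\sigma_g$. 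First I would record, in the same way, that the boundary nonlinearity is conformally natural: since $q = \frac{2(n-1)}{n-2}$, one gets $\idm |u|^q \d\sigma_{\tilde g} = \idm |\rho u|^q \d\sigma_g$. Hence the map $T_\rho : H^1(M,\tilde g) \to H^1(M,g)$, $T_\rho u := \rho u$, is a bijection (its inverse is $T_{1/\rho}$) that satisfies $\mathscr{Q}_{\tilde g}(u) = \mathscr{Q}_g(T_\rho u)$ and $\idm|u|^q\d\sigma_{\tilde g} = \idm|T_\rho u|^q\d\sigma_g$.

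The next step is to transport the variational structure through $T_\rho$. From the two identities above, $T_\rho$ maps $\cN_{\tilde g}$ bijectively onto $\cN_g$ and, since $(\rho u)^\pm = \rho u^\pm$ because $\rho > 0$, it also maps $\cE_{\tilde g}$ bijectively onto $\cE_g$; moreover $J_{\tilde g}(u) = \tfrac12 \mathscr{Q}_{\tilde g}(u) - \tfrac1q \idm|u|^q\d\sigma_{\tilde g} = \tfrac12\mathscr{Q}_g(T_\rho u) - \tfrac1q\idm|T_\rho u|^q\d\sigma_g = J_g(T_\rho u)$. Taking infima over $\cN_{\tilde g}$ (resp. $\cE_{\tilde g}$) and using that $T_\rho$ is a bijection onto $\cN_g$ (resp. $\cE_g$) that preserves the value of the functional yields $\mu_1((M,\tilde g)) = \mu_1((M,g))$ and $\mu_2((M,\tilde g)) = \mu_2((M,g))$, which is the claim. (Equivalently, one may invoke \eqref{eq:Q_mu1} for $\mu_1$, deducing its invariance directly from the conformal invariance of $Q(M,\partial M)$, which is classical; the argument above has the advantage of covering $\mu_2$ uniformly.)

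I expect the only genuine content to be the conformal transformation laws of the conformal Laplacian and of the boundary operator together with the scaling of the volume and boundary measures; these are standard (see Escobar \cite{e1,e2}), so the proof is essentially bookkeeping once they are in place. The point requiring a little care is checking that $T_\rho$ respects the decomposition into positive and negative parts — this is where the positivity of the conformal factor $\rho$ enters — so that $\cE$ is genuinely preserved and $\mu_2$ inherits the invariance; everything else is a direct substitution.
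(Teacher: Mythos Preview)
Your proposal is correct and follows essentially the same approach as the paper: it exploits the conformal covariance identities $\mathscr{Q}_{\tilde g}(u)=\mathscr{Q}_g(\rho u)$ and $\idm|u|^q\d\sigma_{\tilde g}=\idm|\rho u|^q\d\sigma_g$ (which the paper cites from \cite{e00}) to see that multiplication by the conformal factor is a bijection carrying $\cN_{\tilde g}$ to $\cN_g$ and $\cE_{\tilde g}$ to $\cE_g$ while preserving $J$, with the key observation $(\rho u)^\pm=\rho u^\pm$ because $\rho>0$. The paper's version is slightly terser and runs the bijection in the other direction (via $u\mapsto\vp^{-1}u$), but the content is identical.
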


\begin{proof}
Let $\tilde{g}:=\vp^{4/(n-2)}g$, where $\vp$ is a smooth positive function on $M$. To make it clear which metric is being used we will include it as a subscript and write, for example,
$$\mathscr{Q}_g[u]:=\im(|\nabla u|_g^2+c_nR_gu^2)\dv_g+\idm d_nh_gu^2\d \sigma_g.$$
As shown in \cite[(1.8), (1.9) and (1.10)]{e00}, for every $u\in H^1(M)$,
$$\mathscr{Q}_g[u]=\mathscr{Q}_{\tilde{g}}[\vp^{-1}u],\quad\im|u|^p\d v_g=\im|\vp^{-1}u|^p\d v_{\tilde{g}}\quad\text{and}\quad\idm|u|^q\d\sigma_g=\idm|\vp^{-1}u|^q\d\sigma_{\tilde{g}}$$
Therefore, $J_g(u)=J_{\tilde{g}}(\vp^{-1}u)$ and
$$u\in\cN_g\quad\text{if and only if}\quad \vp^{-1}u\in\cN_{\tilde{g}}.$$
If follows that $\mu_1(M)$ is a conformal invariant. Furthermore, as $(\vp^{-1}u)^\pm=\vp^{-1}u^\pm$,
$$u\in\cE_g\quad\text{if and only if}\quad \vp^{-1}u\in\cE_{\tilde{g}}.$$
Thus, $\mu_2(M)$ is a conformal invariant.
\end{proof}

It follows from Lemma \ref{lem:nehari}$(c)$ that $\mu_1(M)$ coincides with the mountain pass value $J_{mp}$ defined in \cite[p. 812]{hl}. Han and Li showed that this value is attained if $\mu_1(M)<\mu_1(B_b)$, where $B_b$ is the spherical cap defined as follows: Let $\xi_b:=(0,\ldots,0,-\frac{b}{n-2},0)\in\r^{n+1}$, $\sn_b:=\{\xi\in\r^{n+1}:|\xi-\xi_b|=1\}$ be the unit sphere in $\r^{n+1}$ centered at $\xi_b$, and $\pi_b:\sn_b\to\rn$ be the stereographic projection from the north pole $N_b:=(0,\ldots,0,-\frac{b}{n-2},1)\in\r^{n+1}$. Then,
$$B_b:=\pi_b^{-1}(\rn_+),$$
where $\rn_+:=\{(x_1,\ldots,x_n):x_n\geq 0\}$ is the upper half-space in $\rn$. Han and Li showed that the inequality $\mu_1(M)<\mu_1(B_b)$ holds true for any $b\in\r$ if $n\geq 5$ and $M$ has a nonumbilic point on its boundary.
  
\vskip1truecm
\tikzset{every picture/.style={line width=0.75pt}} %set default line width to 0.75pt        
% \begin{figure}
\begin{center}
\resizebox{0.5\textwidth}{!}{
\begin{tikzpicture}[x=0.75pt,y=0.75pt,yscale=-1,xscale=1]
%uncomment if require: \path (0,445); %set diagram left start at 0, and has height of 445
%Shape: Arc [id:dp9079438101689999] 
\draw  [draw opacity=0][fill={rgb, 255:red, 237; green, 244; blue, 246 }  ,fill opacity=1 ] (268.6,132.46) .. controls (267.81,133.32) and (266.95,134.15) .. (266.03,134.95) .. controls (251.51,147.47) and (226.13,146.45) .. (209.35,132.66) .. controls (197.18,122.66) and (192.87,108.7) .. (196.96,97) -- (235.65,109.97) -- cycle ; \draw   (268.6,132.46) .. controls (267.81,133.32) and (266.95,134.15) .. (266.03,134.95) .. controls (251.51,147.47) and (226.13,146.45) .. (209.35,132.66) .. controls (197.18,122.66) and (192.87,108.7) .. (196.96,97) ;  
%Shape: Ellipse [id:dp6040584703382902] 
\draw   (105.11,184.39) .. controls (105.11,136.2) and (144.7,97.12) .. (193.53,97.12) .. controls (242.37,97.12) and (281.96,136.2) .. (281.96,184.39) .. controls (281.96,232.59) and (242.37,271.66) .. (193.53,271.66) .. controls (144.7,271.66) and (105.11,232.59) .. (105.11,184.39) -- cycle ;
%Shape: Ellipse [id:dp7979328158826097] 
\draw  [dash pattern={on 0.84pt off 2.51pt}] (105,183.83) .. controls (105,171.77) and (144.39,162) .. (192.98,162) .. controls (241.57,162) and (280.96,171.77) .. (280.96,183.83) .. controls (280.96,195.89) and (241.57,205.66) .. (192.98,205.66) .. controls (144.39,205.66) and (105,195.89) .. (105,183.83) -- cycle ;
%Straight Lines [id:da9493859733517155] 
\draw  [dash pattern={on 4.5pt off 4.5pt}]  (195.96,182.66) -- (378.96,184.66) ;
\draw [shift={(195.96,182.66)}, rotate = 0.63] [color={rgb, 255:red, 0; green, 0; blue, 0 }  ][fill={rgb, 255:red, 0; green, 0; blue, 0 }  ][line width=0.75]      (0, 0) circle [x radius= 3.35, y radius= 3.35]   ;
%Straight Lines [id:da4891406188310433] 
\draw  [dash pattern={on 4.5pt off 4.5pt}]  (196.96,98.66) -- (375.96,180.66) ;
\draw [shift={(196.96,98.66)}, rotate = 24.61] [color={rgb, 255:red, 0; green, 0; blue, 0 }  ][fill={rgb, 255:red, 0; green, 0; blue, 0 }  ][line width=0.75]      (0, 0) circle [x radius= 3.35, y radius= 3.35]   ;
%Straight Lines [id:da9837717496761356] 
\draw    (384,202) -- (378.03,33.66) ;
\draw [shift={(377.96,31.66)}, rotate = 87.97] [color={rgb, 255:red, 0; green, 0; blue, 0 }  ][line width=0.75]    (10.93,-3.29) .. controls (6.95,-1.4) and (3.31,-0.3) .. (0,0) .. controls (3.31,0.3) and (6.95,1.4) .. (10.93,3.29)   ;
%Straight Lines [id:da23858752917358672] 
\draw    (431.96,124.66) -- (292.22,298.11) ;
\draw [shift={(290.96,299.66)}, rotate = 308.86] [color={rgb, 255:red, 0; green, 0; blue, 0 }  ][line width=0.75]    (10.93,-3.29) .. controls (6.95,-1.4) and (3.31,-0.3) .. (0,0) .. controls (3.31,0.3) and (6.95,1.4) .. (10.93,3.29)   ;
%Straight Lines [id:da8037045511959567] 
\draw  [dash pattern={on 4.5pt off 4.5pt}]  (105.96,181.66) -- (193.96,182.66) ;
%Shape: Rectangle [id:dp06226044949390663] 
\draw  [draw opacity=0][fill={rgb, 255:red, 237; green, 244; blue, 246 }  ,fill opacity=1 ] (452.35,101.66) -- (543.53,101.66) -- (393.12,287.66) -- (301.95,287.66) -- cycle ;
%Straight Lines [id:da40238138814735125] 
\draw    (382.96,184.66) -- (548.96,185.65) ;
\draw [shift={(550.96,185.66)}, rotate = 180.34] [color={rgb, 255:red, 0; green, 0; blue, 0 }  ][line width=0.75]    (10.93,-3.29) .. controls (6.95,-1.4) and (3.31,-0.3) .. (0,0) .. controls (3.31,0.3) and (6.95,1.4) .. (10.93,3.29)   ;
\draw [shift={(382.96,184.66)}, rotate = 0.34] [color={rgb, 255:red, 0; green, 0; blue, 0 }  ][fill={rgb, 255:red, 0; green, 0; blue, 0 }  ][line width=0.75]      (0, 0) circle [x radius= 3.35, y radius= 3.35]   ;
%Curve Lines [id:da7368899185412737] 
\draw    (236.76,106.7) .. controls (274.35,79.41) and (253.56,125.21) .. (292.96,95.66) ;
\draw [shift={(235,108)}, rotate = 323.13] [color={rgb, 255:red, 0; green, 0; blue, 0 }  ][line width=0.75]    (10.93,-3.29) .. controls (6.95,-1.4) and (3.31,-0.3) .. (0,0) .. controls (3.31,0.3) and (6.95,1.4) .. (10.93,3.29)   ;
% Text Node
\draw (390,244.4) node [anchor=north west][inner sep=0.75pt]    {$\mathbb{R}_{+}^{n}$};
% Text Node
\draw (379,188.4) node [anchor=north west][inner sep=0.75pt]    {$0$};
% Text Node
\draw (196,186.4) node [anchor=north west][inner sep=0.75pt]    {$\xi _{b}$};
% Text Node
\draw (185,67.4) node [anchor=north west][inner sep=0.75pt]    {$N_{b}$};
% Text Node
\draw (297,82.4) node [anchor=north west][inner sep=0.75pt]    {$B_{b}$};
\end{tikzpicture}
}\end{center}
\vskip1truecm

Our aim is to give a condition that guarantees that $\mu_2(M)$ is attained at some $u\in\cE$, which is a sign-changing solution of \eqref{problem}.

First, we give a minmax characterization of $\mu_2(M)$. We set
\begin{align*}
\cP^+:=\{u\in H^1(M):u\geq 0\},\qquad \cP^-:=\{u\in H^1(M):u\leq 0\},
\end{align*}
and define $\tau:H^1(M)\to[0,\infty)$ by $\tau(0):=0$ and
\begin{equation*}
\tau(u):=
\begin{cases}
\frac{F(u)}{\mathscr{Q}u} &\text{ \ if \ }u\neq 0\text{ \ and \ }b\geq 0,\smallskip \\
\frac{F_1(u)}{\mathscr{Q}u-F_2(u)} &\text{ \ if \ }u\neq 0\text{ \ and \ }b<0,
\end{cases}
\end{equation*}
where $F_1(u):=a_n\im |u|^p \dv$, $F_2(u):=b\idm|u|^q\d\sigma$ and $F=F_1+F_2$, as defined in \eqref{eq:F}. The function $\tau$ is continuous and, as $\mathscr{Q}u=\mathscr{Q}u^++\mathscr{Q}u^-$ and $F_i(u)=F_i(u^+)+F_i(u^-)$, $i=1,2$, one has that
\begin{equation}\label{eq:tau}
\tau(u)\leq\tau(u^+)+\tau(u^-)\qquad\text{for any \ }u\in H^1(M).
\end{equation}
Consider the set $\Sigma$ of all continuous maps $\sigma:[0,1]\times[0,1]\to H^1(M)$ that satisfy
\begin{equation*}
\begin{cases}
\sigma(0,t)\in\cP^+, &\sigma(1,t)\in\cP^-,\\
\sigma(s,0)=0, &J(\sigma(s,1))\leq 0\text{ \ and \ }\tau(\sigma(s,1))\geq 2.
\end{cases}
\end{equation*}

\begin{lemma}\label{lem:minmax}
\begin{itemize}
\item[$(a)$]Given $u\in\cE$ there exists $\sigma_u\in\Sigma$ such that $\max_{s,t\in I} J(\sigma_u(s,t))=J(u)$.
\item[$(b)$] If $\sigma\in\Sigma$ then there exists $(s_0,t_0)\in(0,1)\times(0,1)$ such that $\sigma(s_0,t_0)\in\cE$.
\end{itemize}
\smallskip

As a consequence,
$$\mu_2(M)=\inf_{\sigma\in\Sigma}\,\max_{s,t\in[0,1]}J(\sigma(s,t)).$$
\end{lemma}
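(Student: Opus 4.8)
The plan is to prove the two items of Lemma~\ref{lem:minmax} separately and then combine them to obtain the minmax formula. I would set $I:=[0,1]$ throughout.

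\medskip

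\textbf{Proof of $(a)$.} Fix $u\in\cE$, so $u^+,u^-\in\cN$ and $u=u^++u^-$ with $u^+\geq0\geq u^-$. The idea is to parametrise the two-dimensional ``cone'' spanned by $u^+$ and $u^-$ and to rescale it so that the square becomes a piece of this cone. For $(s,t)\in I\times I$ set $w_{s,t}:= (1-s)\,u^+ + s\,u^-$, a function whose positive part is $(1-s)u^+$ and whose negative part is $s\,u^-$; its trace on $\partial M$ is nontrivial except when $s\in\{0,1\}$ degenerates one of the parts (but $w_{0,t}=u^+$ and $w_{1,t}=u^-$ still have nontrivial trace). Using Lemma~\ref{lem:nehari}$(c)$, for each such $w$ there is a unique $t_{w}>0$ with $t_w w\in\cN$, depending continuously on $w$, and $t\mapsto J(t w)$ is strictly increasing up to $t_w$ and strictly decreasing afterwards. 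Now I would define
$$\sigma_u(s,t):= R\,t\,w_{s,t},$$
where $R>0$ is a large constant to be chosen. The boundary conditions are checked directly: $\sigma_u(0,t)=R t\,u^+\in\cP^+$, $\sigma_u(1,t)=R t\,u^-\in\cP^-$, and $\sigma_u(s,0)=0$. For the top edge, since $\tau(\lambda v)=\lambda^{q-2}\tau(v)$ and $J(\lambda v)\to-\infty$ as $\lambda\to\infty$ whenever $v$ has nontrivial trace, choosing $R$ large enough (uniformly in $s$, using compactness of $I$ and continuity of $\tau$ and of $s\mapsto \|w_{s,1}\|$, together with the fact that $w_{s,1}$ has trace bounded away from zero in $L^q(\partial M)$ because either $u^+$ or $u^-$ survives) gives $J(\sigma_u(s,1))\leq 0$ and $\tau(\sigma_u(s,1))\geq 2$ for all $s$. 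Finally, to compute the maximum: on the slice $s$ fixed, $\max_t J(R t\,w_{s,1})\le\max_{\lambda>0}J(\lambda w_{s,1})=J(t_{w_{s,1}}w_{s,1})$, and here one uses that $w_{s,1}=(1-s)u^++s\,u^-$, combined with the fact that $u^\pm\in\cN$ and that on the Nehari manifold $J$ equals $\frac{q-2}{2q}\mathscr{Q}(\cdot)$, plus the orthogonality $\mathscr{Q}(a u^+ + b u^-)=a^2\mathscr{Q}u^+ + b^2\mathscr{Q}u^-$ (no cross term, since $u^+$ and $u^-$ have disjoint supports in $M$ and on $\partial M$, and the only derivative cross term $\int\nabla u^+\cdot\nabla u^-$ vanishes a.e.). A short computation then shows the quantity $\max_{\lambda}J(\lambda((1-s)u^++s u^-))$ is maximised exactly at $s$ with $t_{w_{s,1}}(1-s)=t_{w_{s,1}}s=1$, forcing $s=\tfrac12$ and the maximum value $J(u^+)+J(u^-)=J(u)$. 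Hence $\max_{I\times I}J(\sigma_u)=J(u)$.

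\medskip

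\textbf{Proof of $(b)$.} Let $\sigma\in\Sigma$. I want $(s_0,t_0)$ with $\sigma(s_0,t_0)^\pm\in\cN$, i.e.\ $J'(\sigma(s_0,t_0)^+)\sigma(s_0,t_0)^+=0$ and $J'(\sigma(s_0,t_0)^-)\sigma(s_0,t_0)^-=0$ simultaneously. Introduce the map $\Phi:I\times I\to\r^2$,
$$\Phi(s,t):=\Bigl(\,\mathscr{Q}(\sigma(s,t)^+)-\idm|\sigma(s,t)^+|^q\d\sigma\,,\ \ \mathscr{Q}(\sigma(s,t)^-)-\idm|\sigma(s,t)^-|^q\d\sigma\,\Bigr).$$
This is continuous because $u\mapsto u^\pm$ is continuous $H^1(M)\to H^1(M)$ and the trace and the $L^q$ norm depend continuously on $u$. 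I claim $\Phi$ has a zero, which is exactly the required $(s_0,t_0)$, provided the zero occurs at a point where $\sigma(s_0,t_0)^+\neq0$ and $\sigma(s_0,t_0)^-\neq0$; I will arrange this by a degree/Miranda-type argument reading off the sign of each component on the appropriate edge of the square. On the edge $s=0$ we have $\sigma(0,t)\in\cP^+$, so $\sigma(0,t)^-=0$ and the second component of $\Phi$ vanishes on the whole edge --- this is a degeneracy I must handle, so instead I would first \emph{perturb} or rather argue directly: consider the function $g_-(s,t):=\Psi(\sigma(s,t)^-)$ where $\Psi(v):=\mathscr{Q}v-\idm|v|^q\d\sigma$ extended by $\Psi(0)=0$. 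On $s=0$ it is $0$; I would rather study the auxiliary function on $(0,1)\times(0,1)$ and use the structure of $\Sigma$.

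Concretely, the cleanest route is the following. For $v\in H^1(M)$ with nontrivial trace define $r(v):=t_v=\bigl(\mathscr{Q}v/\idm|v|^q\bigr)^{1/(q-2)}$, the Nehari rescaling; $r$ is continuous and positive. For $(s,t)$ with $\sigma(s,t)^+$ and $\sigma(s,t)^-$ both having nontrivial trace, set
$$G(s,t):=\bigl(\,\log r(\sigma(s,t)^+)\,,\ \log r(\sigma(s,t)^-)\,\bigr)\in\r^2,$$
so that $\sigma(s,t)\in\cE$ iff $G(s,t)=(0,0)$. Now I examine the boundary of the square. On $t=0$, $\sigma(s,0)=0$, not in the domain of $G$; on $t=1$, $\tau(\sigma(s,1))\geq2$ and $J(\sigma(s,1))\le 0$; this forces $\mathscr{Q}(\sigma(s,1))$ small relative to $\idm|\sigma(s,1)|^q$, so $r(\sigma(s,1))<1$ for \emph{whichever} of $\sigma^{\pm}$ carries most of the trace, and more work shows both $r(\sigma(s,1)^+)\le1$ and $r(\sigma(s,1)^-)\le1$ — I would prove this from $\tau\geq 2$ by splitting $\mathscr{Q}$ and $\idm|\cdot|^q$ over the two supports. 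On $s=0$, $\sigma^-$ degenerates, and on $s=1$, $\sigma^+$ degenerates; near those edges I regularise, replacing $\sigma(s,t)^+$ by $\sigma(s,t)^+ + \eta\,e$ for a fixed positive $e$ and a small $\eta>0$ and letting $\eta\to0$, or — simpler — I note that for $s$ near $0$ the component $\log r(\sigma(s,t)^-)$ tends to $+\infty$ (the negative part is tiny, so its Nehari rescaling is huge), and for $s$ near $1$ it tends to $-\infty$; symmetrically for $\log r(\sigma(s,t)^+)$ in $t$-like directions. The upshot is a Poincaré–Miranda configuration: the first coordinate of $G$ has opposite signs on two opposite edges and likewise the second, hence $G$ vanishes somewhere in the interior, giving $(s_0,t_0)$ with $\sigma(s_0,t_0)\in\cE$.

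\medskip

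\textbf{Conclusion.} Given any $\sigma\in\Sigma$, by $(b)$ there is $(s_0,t_0)$ with $\sigma(s_0,t_0)\in\cE$, so $\max_{I\times I}J(\sigma(s,t))\geq J(\sigma(s_0,t_0))\geq\mu_2(M)$; taking the infimum over $\sigma$ gives $\inf_{\sigma\in\Sigma}\max_{I\times I}J(\sigma)\geq\mu_2(M)$. Conversely, for any $u\in\cE$, the map $\sigma_u$ from $(a)$ lies in $\Sigma$ and has maximum exactly $J(u)$, so $\inf_{\sigma\in\Sigma}\max J(\sigma)\leq J(u)$; taking the infimum over $u\in\cE$ gives the reverse inequality. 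Hence $\mu_2(M)=\inf_{\sigma\in\Sigma}\max_{s,t\in[0,1]}J(\sigma(s,t))$.

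\medskip

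\textbf{Main obstacle.} The delicate point is part $(b)$: the maps $\sigma\in\Sigma$ are only continuous, not of class $\cC^1$, so I cannot use a naive degree argument on $\Phi$, and the Nehari rescalings $r(\sigma(s,t)^\pm)$ blow up precisely on the two edges $s=0$ and $s=1$ where one of $\sigma^\pm$ degenerates. Turning ``blows up to $+\infty$ on one edge, $-\infty$ on the opposite edge'' into a rigorous existence-of-zero statement — via the Poincaré–Miranda theorem applied to a suitable continuous extension of $G$ (or equivalently a compactness/deformation argument showing the level set $\{J'\!(\sigma^+)\sigma^+=0\}$ and $\{J'\!(\sigma^-)\sigma^-=0\}$ must intersect) — is where the real care is needed, and also where one must use the three defining conditions of $\Sigma$ in an essential way (the $\cP^\pm$ endpoints to control the two coordinates separately, and the conditions $J\le0$, $\tau\ge2$ on the top edge to pin down the sign there).
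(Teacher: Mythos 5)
Part $(a)$ and the final minmax deduction are correct and essentially coincide with the paper's argument: the paper uses $\sigma_u(s,t)=Rt\,\gamma(s)$ with the Nehari reparametrization $\gamma(s)=t_{u_s}u_s$, $u_s=(1-s)u^++su^-$, while you rescale only at the end, but the estimate chain $J(\lambda u_s)=J(\lambda(1-s)u^+)+J(\lambda s u^-)\le J(u^+)+J(u^-)=J(u)$ is the same (you should just note $R\ge 2$ so that $u$ itself lies in the image and the maximum is attained).

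Part $(b)$, however, has a genuine gap, and it is exactly the point you flag as the ``main obstacle''. Your proposed coordinates $G(s,t)=(\log r(\sigma(s,t)^+),\log r(\sigma(s,t)^-))$ do not admit a Poincaré--Miranda configuration on the square. First, the claim that $\tau(\sigma(s,1))\ge 2$ forces \emph{both} $r(\sigma(s,1)^+)\le 1$ and $r(\sigma(s,1)^-)\le 1$ is false: from the mediant inequality one only gets $\tau(\sigma(s,1)^+)+\tau(\sigma(s,1)^-)\ge\tau(\sigma(s,1))\ge 2$, so at least one of the two parts has $\tau\ge 1$, not both (at $s=0$ the negative part is identically zero, so $r(\sigma(0,1)^-)$ is not even defined). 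Second, the individual Nehari rescalings blow up on \emph{adjacent} edges (e.g.\ $r(\sigma^+)=+\infty$ on $s=1$ and on $t=0$, and also wherever $\sigma(0,t)=0$, which $\Sigma$ does not exclude), so neither coordinate of $G$ has opposite, controlled signs on a pair of opposite edges, and $G$ is not continuous on the closed square; the suggested perturbation/regularisation is only gestured at and does not produce the needed sign pattern. The paper's proof removes the difficulty by never inverting anything: since $\tau$ (with $\tau(0):=0$) is continuous on all of $H^1(M)$ and $u\mapsto u^\pm$ is continuous, the maps $f_1(s,t):=\tau(\sigma(s,t)^+)-\tau(\sigma(s,t)^-)$ and $f_2(s,t):=\tau(\sigma(s,t)^+)+\tau(\sigma(s,t)^-)-2$ are continuous on $[0,1]^2$; the $\cP^\pm$ conditions give $f_1(0,t)\ge 0\ge f_1(1,t)$, while $\sigma(s,0)=0$ gives $f_2(s,0)=-2$ and the top-edge condition together with $\tau(v)\le\tau(v^+)+\tau(v^-)$ gives $f_2(s,1)\ge 0$; Poincaré--Miranda then yields a zero of $(f_1,f_2)$, i.e.\ $\tau(\sigma(s_0,t_0)^+)=\tau(\sigma(s_0,t_0)^-)=1$, which means $\sigma(s_0,t_0)\in\cE$. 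It is precisely this sum/difference combination of $\tau(\sigma^\pm)$ (rather than the two rescalings separately) that makes the edge conditions of $\Sigma$ usable, and your argument is incomplete without it or an equivalent device.
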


\begin{proof}
$(a):$ \ Let $u\in\cE$ and set $u_s:=(1-s)u^++su^-$, $s\in[0,1]$. Note that $u_s\neq 0$. Define $\gamma:[0,1]\to\cN$ by $\gamma(s):=t_{u_s}u_s$ with $t_{u_s}$ as in Lemma \ref{lem:nehari}$(c)$, that is, $t_{u_s}$ is the unique positive number such that $t_{u_s}u_s\in\cN$. Since $u,u^+,u^-\in\cN$ and $u_\frac{1}{2}=\frac{1}{2}u$ we have that
$$\gamma(0)=u^+, \qquad \gamma(1)=u^-\qquad \text{and}\qquad \gamma(\tfrac{1}{2})=u.$$
Fix $R>0$ such that $J(R\gamma(s))\leq 0$ and $\tau(R\gamma(s))\geq 2$ for all $s\in[0,1]$, and define $\sigma_u:[0,1]\times[0,1]\to H^1(M)$ by
$$\sigma_u(s,t):=Rt\gamma(s).$$
Since $\gamma(s)\in\cN$ we have, by Lemma \ref{lem:nehari}$(c)$, that $J(Rt\gamma(s))\leq J(\gamma(s))$ for all $s,t\in[0,1]$ and, as $u^\pm\in\cN$,
$$J(\gamma(s))=J(\gamma(s)^+)+J(\gamma(s)^-)=J(t_{u_s}(1-s)u^+)+J(t_{u_s}su^-)\leq J(u^+)+J(u^-)=J(u)$$
for all $s\in[0,1]$. This shows that $J(u)=\max_{s,t\in I} J(\sigma_u(s,t))$ for every $u\in\cE$ and, as a consequence, 
$$\mu_2(M)\geq\inf_{\sigma\in\Sigma}\,\max_{s,t\in[0,1]}J(\sigma(s,t)).$$

$(b):$ \ Let $\sigma\in\Sigma$ and define $f=(f_1,f_2):[0,1]\times[0,1]\to\r^2$ by
$$f_1(s,t):=\tau(\sigma(s,t)^+)-\tau(\sigma(s,t)^-),\qquad f_2(s,t):=\tau(\sigma(s,t)^+)+\tau(\sigma(s,t)^-)-2.$$
These are continuous functions and, for all $s,t\in [0,1]$, they satisfy
$$f_1(0,t)\geq 0,\qquad f_1(1,t)\leq 0, \qquad f_2(s,0)=-2,\qquad f_2(s,1)\geq 0,$$
where the last inequality follows from \eqref{eq:tau}. By the Poincaré-Miranda theorem \cite{k} there exists $(s_0,t_0)\in[0,1]\times[0,1]$ such that $f_1(s_0,t_0)=0=f_2(s_0,t_0)$. This implies that 
$$\tau(\sigma(s_0,t_0)^+)=1=\tau(\sigma(s_0,t_0)^-).$$
Noting that $\tau(u)=1$ if and only if $u\in\cN$, we conclude that $\sigma(s_0,t_0)\in\cE$. As a consequence, 
$$\mu_2(M)\leq\inf_{\sigma\in\Sigma}\,\max_{s,t\in[0,1]}J(\sigma(s,t)).$$
This completes the proof.
\end{proof}

By \eqref{eq:Q_norm}, the expressions
$$\langle u,v \rangle_\mathscr{Q}:=\im(\nabla u\cdot\nabla v+c_nRuv)\dv+\idm d_nhuv\d \sigma, \qquad \|u\|_\mathscr{Q}:=\sqrt{\mathscr{Q}u},$$
define an inner product and a norm in $H^1(M)$, equivalent to the standard one. The gradient $\nabla J(u)$ of $J$ at a point $u\in H^1(M)$ with respect to this inner product is $\nabla J(u)=u-K_1(u)-K_2(u)$ where $K_i(u)\in H^1(M)$ is defined by
\begin{equation}\label{eq:defK}
\langle K_1(u),v \rangle_\mathscr{Q}=a_n\im|u|^{p-2}uv\dv,\qquad\langle K_2(u),v \rangle_\mathscr{Q}=b\idm|u|^{q-2}uv\d\sigma,
\end{equation}
for all $v\in H^1(M)$. The negative gradient flow $\eta:\mathcal{G}\to H^1(M)$ of $J$ is the solution of the Cauchy problem
\begin{equation*}
\begin{cases}
\frac{\d}{\d t}\eta(t,u)= - \nabla J(\eta(t,u)),\\
\eta(0,u)= u,
\end{cases}
\end{equation*}
where $\mathcal{G}:= \{ (t,u): u \in H^1(M), \ 0 \leq t < T(u) \}$ and $T(u) \in ( 0, \infty]$ is the maximal existence time for the trajectory $t \to \eta(t,u)$. A subset $\cZ$ of $H^1(M)$ is said to be \emph{strictly positively invariant} under $\eta$ if 
$$\eta(t,u) \in \mathrm{int} (\cZ) \quad \text{for every \ } u \in \cZ \text{ and every } t \in (0,T(u)),$$
where $\mathrm{int}(\mathcal{Z})$ denotes the interior of $\mathcal{Z}$ in $H^1(M)$. If $\cZ$ is strictly positively invariant under $\eta$, then the set
$$\cA(\cZ):= \{ u \in H^1(M): \eta(t,u) \in \cZ\text{ for some } t \in (0,T(u)) \}$$
is open in $H^1(M)$ and the entrance time map $e_{\cZ}: \cA(\cZ)\to\r$, defined by
$$e_{\cZ}(u):= \inf \{ t \geq 0:\eta(t,u)\in\cZ\},$$
is continuous. 

Given $\alpha>0$ we set
$$\dist_\mathscr{Q}(u,\cZ):=\inf_{v\in\cZ}\|u-v\|_\mathscr{Q}\qquad\text{and}\qquad B_\alpha(\cZ):=\{u\in H^1(M):\dist_\mathscr{Q}(u,\cZ)\leq\alpha\}.$$
and, for $d\in\r$, we write 
$$J^{\leq d}:=\{u\in H^1(M):J(u)\leq d\}.$$

The proof of the next lemma is inspired by \cite[Lemma 2]{cw}.

\begin{lemma} \label{lem:invariance}
There exists $\alpha>0$ such that
\begin{itemize}
\item[$(a)$] $B_\alpha(\cP^+)\cap\cE=\emptyset$ \ and \ $B_\alpha(\cP^-)\cap\cE=\emptyset,$
\item[$(b)$] If $b\geq 0$, then $B_\alpha(\cP^+)$ and $B_\alpha(\cP^-)$ are strictly positively invariant. 
\item[$(c)$] If $b\geq 0$ and $J$ does not have a sign-changing critical point $u$ with $J(u)=d$ then the set
$$\cZ_d:=B_\alpha(\cP^+)\cup B_\alpha(\cP^-)\cup J^{\leq d}$$
is strictly positively invariant. As a consequence, the map $\vr_d:\cA(\cZ_d)\to\cZ_d$ given by
$$\vr_d(u):=\eta(e_{\cZ_d},u)$$
is continuous and satisfies $\vr_d(u)=u$ for every $u\in \cZ_d$.
\end{itemize}
\end{lemma}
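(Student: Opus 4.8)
The plan is to run the standard invariant–set scheme for the negative gradient flow $\eta$ of $J$, reducing the whole statement to a single quantitative estimate: that $K$ improves positivity. Throughout I use the equivalence of $\|\cdot\|_\mathscr{Q}$ with the $H^1(M)$–norm \eqref{eq:Q_norm}, the continuity of the trace $H^1(M)\hookrightarrow L^q(\partial M)$, and the orthogonality $\langle w^+,w^-\rangle_\mathscr{Q}=0$ for every $w\in H^1(M)$ (which holds because $w^+w^-=0$ a.e.\ on $M$ and on $\partial M$, and $\nabla w^+\cdot\nabla w^-=0$ a.e.). For $(a)$ I would start from the pointwise inequality $|u^-|\le|u-v|$, valid a.e.\ on $M$ and on $\partial M$ for every $v\in\cP^+$ (on $\{u<0\}$, $u-v\le u<0$ so $v-u\ge-u$; elsewhere $u^-=0$); restricting to $\partial M$, going through the trace embedding and taking the infimum over $v$, this gives a constant $C$ with $\|u^-\|_{L^q(\partial M)}\le C\,\dist_\mathscr{Q}(u,\cP^+)$, and symmetrically for $\cP^-$. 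Hence, if $u\in\cE$, then $u^-\in\cN$ forces $\idm|u^-|^q\d\sigma=\mathscr{Q}(u^-)\ge a_0>0$ by Lemma~\ref{lem:nehari}$(a)$, so $\dist_\mathscr{Q}(u,\cP^+)$ is bounded below by a positive constant (and likewise for $\cP^-$); thus $(a)$ holds once $\alpha$ is small enough.

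The heart of the matter is the estimate: there is $\alpha>0$ such that $\dist_\mathscr{Q}(K(u),\cP^+)\le\tfrac12\dist_\mathscr{Q}(u,\cP^+)$ whenever $\dist_\mathscr{Q}(u,\cP^+)\le\alpha$, and similarly for $\cP^-$. By the orthogonality above and the defining property of $K$,
$$\|K(u)^-\|_\mathscr{Q}^2=\langle K(u),K(u)^-\rangle_\mathscr{Q}=\idm|u|^{q-2}u\,K(u)^-\d\sigma\le\idm|u^-|^{q-1}|K(u)^-|\d\sigma,$$
since the part of the integral over $\{u\ge0\}$ is nonpositive. Hölder's inequality with exponents $q/(q-1)$ and $q$, followed by the trace embedding, then gives $\|K(u)^-\|_\mathscr{Q}\le C\|u^-\|_{L^q(\partial M)}^{q-1}$. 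Combining this with the bound from $(a)$ and with $\dist_\mathscr{Q}(K(u),\cP^+)\le\|K(u)^-\|_\mathscr{Q}$ yields $\dist_\mathscr{Q}(K(u),\cP^+)\le C'\dist_\mathscr{Q}(u,\cP^+)^{q-1}$, and since $q-1>1$ the claim follows by shrinking $\alpha$ (taken also small enough for $(a)$). This same computation incidentally contains $K(\cP^+)\subseteq\cP^+$ (set $u^-=0$).

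For $(b)$, write $\nabla J(u)=u-K(u)$, so $u-s\nabla J(u)=(1-s)u+sK(u)$; using convexity of $v\mapsto\dist_\mathscr{Q}(v,\cP^+)$ and the core estimate, an Euler step satisfies $\dist_\mathscr{Q}((1-s)u+sK(u),\cP^+)\le(1-\tfrac s2)\dist_\mathscr{Q}(u,\cP^+)$ for $s\in[0,1]$ when $\dist_\mathscr{Q}(u,\cP^+)\le\alpha$. Passing to the flow, $\phi(t):=\dist_\mathscr{Q}(\eta(t,u),\cP^+)$ is locally Lipschitz and satisfies the Dini inequality $D^+\phi(t)\le-\tfrac12\phi(t)$ whenever $\phi(t)\le\alpha$, so $\phi(0)\le\alpha$ implies $\phi(t)\le e^{-t/2}\phi(0)<\alpha$ for all $t\in(0,T(u))$, i.e.\ $\eta(t,u)\in\mathrm{int}(B_\alpha(\cP^+))$ (and similarly for $\cP^-$). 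For $(c)$, fix such an $\alpha$, let $u\in\cZ_d$ and $t\in(0,T(u))$: if $u\in B_\alpha(\cP^\pm)$, part $(b)$ gives $\eta(t,u)\in\mathrm{int}(B_\alpha(\cP^\pm))\subseteq\mathrm{int}(\cZ_d)$; if $u\in J^{\le d}$ is not a critical point at level $d$, then $J$ strictly decreases along the trajectory, so $J(\eta(t,u))<d$ and $\eta(t,u)\in\{J<d\}\subseteq\mathrm{int}(\cZ_d)$; the only remaining case, $J(u)=d$ with $J$ constant on $[0,t]$, forces $\nabla J\equiv0$ along the trajectory, so $u$ is a critical point at level $d$, hence by hypothesis does not change sign, so $u\in\cP^+\cup\cP^-$ and $\eta(t,u)=u\in\mathrm{int}(B_\alpha(\cP^\pm))$. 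Therefore $\cZ_d$ is strictly positively invariant, and the continuity of $\vr_d$ together with $\vr_d|_{\cZ_d}=\mathrm{id}$ follows from the general properties of strictly positively invariant sets recalled above, since $e_{\cZ_d}(u)=0$ for every $u\in\cZ_d$.

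I expect the main obstacle to be the quantitative positivity estimate of the second paragraph and its transfer to the full flow in the third: the strict superlinearity $q-1>1$ of the boundary nonlinearity is precisely what produces the contraction factor $\tfrac12$, and everything else is routine bookkeeping.
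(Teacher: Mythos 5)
Your proposal is correct, and its backbone coincides with the paper's: part $(a)$ is proved exactly as in the text (the pointwise bound $|u^-|\le|u-v|$ for $v\in\cP^+$, the trace/Sobolev quotient, and the lower bound on $\idm|u^-|^q\d\sigma$ coming from $u^-\in\cN$), and your ``core estimate'' $\dist_\mathscr{Q}(K(u),\cP^+)\le C\,\dist_\mathscr{Q}(u,\cP^+)^{q-1}$ is literally the paper's chain of inequalities \eqref{eq:K(u)}, including the $\mathscr{Q}$-orthogonality of $K(u)^+$ and $K(u)^-$ and the restriction of the boundary integral to $\{u<0\}$. Where you genuinely diverge is in converting this contraction into strict positive invariance in $(b)$: the paper invokes Deimling's flow-invariance theorem for closed convex sets to get $\eta(t,u)\in B_\alpha(\cP^+)$ and then a Mazur separation argument to rule out boundary points for $t>0$, whereas you differentiate $\phi(t)=\dist_\mathscr{Q}(\eta(t,u),\cP^+)$ directly, using the convexity of the distance and the Euler-step estimate to get $D^+\phi\le-\tfrac12\phi$ while $\phi\le\alpha$, and Gronwall to conclude $\phi(t)\le e^{-t/2}\phi(0)<\alpha$. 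This is a self-contained, quantitative substitute for the two cited abstract facts and yields strictness in one stroke; the paper's route avoids the Dini-derivative bookkeeping by outsourcing it to the references. Your case analysis for $(c)$ (strict decrease of $J$ off critical points, and the hypothesis excluding sign-changing critical points at level $d$ placing the only stationary trajectories inside $\mathrm{int}\,B_\alpha(\cP^\pm)$) is exactly what the paper leaves implicit in ``$(c)$ is an immediate consequence of $(b)$,'' and the continuity of $\vr_d$ follows, as you say, from the openness of $\cA(\cZ_d)$ and continuity of the entrance time recalled before the lemma. Only cosmetic imprecisions remain (e.g.\ in $(c)$, if $J(u)<d$ and $u$ is critical, $J$ does not strictly decrease, but the conclusion $J(\eta(t,u))<d$ still holds), none of which affects the argument.
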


\begin{proof}
$(a):$ \ Since $M$ is positive, as shown by Escobar in \cite{e1} and \cite{e2}, one has that
\begin{equation}\label{eq:escobarQ}
Q(M):=\inf_{\substack{u\in H^1(M) \\ u\neq 0}}\frac{\mathscr{Q}u}{\big(\im |u|^p\dv\big)^{2/p}}>0\quad\text{and}\quad
Q(M,\partial M):=\inf_{\substack{u\in H^1(M) \\ u|_{\partial M}\neq 0}}\frac{\mathscr{Q}u}{\Big(\idm |u|^q\d \sigma\Big)^{2/q}}>0.
\end{equation}
Therefore, for any $u\in H^1(M)$,
\begin{align}\label{eq:p}
\Big(\im |u^-|^p\dv\Big)^{2/p}&=\inf_{v\in\cP^+}\Big(\im |u-v|^p\dv\Big)^{2/p}\leq \inf_{v\in\cP^+}Q(M)^{-1} \mathscr{Q}[u-v]\nonumber\\
&= Q(M)^{-1}\,\dist_\mathscr{Q}(u,\cP^+)^2,
\end{align}
and, similarly,
\begin{align}\label{eq:q}
\Big(\idm |u^-|^q\d \sigma\Big)^{2/q}\leq Q(M,\partial M)^{-1}\,\dist_\mathscr{Q}(u,\cP^+)^2.
\end{align}
If $u\in\cE$, then $u^-\in\cN$ and $F(u^-)\geq \mu_1(M)>0$. Hence, if $a_n\im |u^-|^p\dv\geq b\idm |u^-|^q\d \sigma$, 
\begin{align*}
\big(\mu_1(M)\big)^{2/p}\leq\big(F(u^-)\big)^{2/p}&\leq \Big(2a_n\im |u^-|^p\dv\Big)^{2/p}\leq (2a_n)^{2/p}Q(M)^{-1}\,\dist_\mathscr{Q}(u,\cP^+)^2
\end{align*}
and, if $a_n\im |u|^p\dv < b\idm |u|^q\d \sigma$,
\begin{align*}
\big(\mu_1(M)\big)^{2/q}\leq\big(F(u^-)\big)^{2/q}&\leq \Big(2b\idm |u^-|^q\d \sigma\Big)^{2/q}\leq (2b)^{2/q}Q(M,\partial M)^{-1}\,\dist_\mathscr{Q}(u,\cP^+)^2.
\end{align*}
Setting 
\begin{equation*}
\alpha_0^2:=
\begin{cases}
\frac{1}{2}\min\Big\{\Big(\frac{\mu_1(M)}{2a_n}\Big)^{2/p}Q(M),\,\Big(\frac{\mu_1(M)}{2b}\Big)^{2/q}Q(M,\partial M)\Big\} &\text{if \ }b>0,\\
\frac{1}{2}\min\Big\{\Big(\frac{\mu_1(M)}{2a_n}\Big)^{2/p}Q(M) &\text{if \ }b\leq 0,
\end{cases}
\end{equation*}
we have that $\alpha_0<\dist_\mathscr{Q}(u,\cP^+)$ for every $u\in\cE$. This shows that $B_{\alpha_0}(\cP^+)\cap\cE=\emptyset$ and, by symmetry, $B_{\alpha_0}(\cP^-)\cap\cE=\emptyset$.

$(b):$ \ Using \eqref{eq:defK}, \eqref{eq:escobarQ} and \eqref{eq:p} we obtain
\begin{align*}
&\dist_\mathscr{Q}(K_1(u),\cP^+)\|K_1(u)^-\|_\mathscr{Q}\leq\|K_1(u)^-\|_\mathscr{Q}^2=\langle K_1(u),K_1(u)^-\rangle_\mathscr{Q}\\
&=a_n\im|u|^{p-2}uK_1(u)^- \leq a_n\im|u^-|^{p-2}u^-K_1(u)^-\\
&\leq a_n\Big(\im |u^-|^p\dv\Big)^{(p-1)/p}\Big(\im |K_1(u)^-|^p\dv\Big)^{1/p}\leq a_nQ(M)^{-p/2}\dist_\mathscr{Q}(u,\cP^+)^{p-1}\|K_1(u)^-\|_\mathscr{Q}.
\end{align*}
Therefore,
\begin{equation*}
\dist_\mathscr{Q}(K_1(u),\cP^+)\leq a_nQ(M)^{-p/2}\,\dist_\mathscr{Q}(u,\cP^+)^{p-1}.
\end{equation*}
If $b\geq 0$, a similar argument, using \eqref{eq:defK},  \eqref{eq:escobarQ} and \eqref{eq:q}, yields
\begin{equation*}
\dist_\mathscr{Q}(K_2(u),\cP^+)\leq b\,Q(M,\partial M)^{-q/2}\,\dist_\mathscr{Q}(u,\cP^+)^{q-1}.
\end{equation*}
As a consequence, there exist $\alpha\in(0,\alpha_0)$ and $\delta\in(0,\frac{1}{2})$ such that
\begin{equation*}
\dist_\mathscr{Q}(K_i(u),\cP^+)\leq \delta\,\dist_\mathscr{Q}(u,\cP^+)\qquad\text{for all \ }u\in B_\alpha(\cP^+),\quad i=1,2,
\end{equation*}
and, setting $K:=K_1+K_2$ we conclude that
\begin{equation} \label{eq:K(u)}
\dist_\mathscr{Q}(K(u),\cP^+)\leq 2\delta\,\dist_\mathscr{Q}(u,\cP^+)\qquad\text{for all \ }u\in B_\alpha(\cP^+).
\end{equation}
Since $B_\alpha(\cP^+)$ is closed and convex, it follows from \cite[Theorem 5.2]{d} that
\begin{equation}\label{eq:positive invariance}
\eta(t,u)\in B_\alpha(\cP^+)\qquad\text{for every \ }u\in B_\alpha(\cP^+)\text{ \ and \ }t\in[0,T(u)).
\end{equation}
Now, arguing by contradiction, assume that $\eta(t_0,u)\in \partial(B_\alpha(\cP^+))$ for some $t_0\in(0,T(u))$ and $u\in B_\alpha(\cP^+)$. By the Mazur separation theorem there exist a continuous linear functional $L:H^1(M)\to\r$ and $\beta>0$ such that $L(\eta(t_0,u))=\beta$ and $L(v)>\beta$ for all $v\in\mathrm{int}(B_\alpha(\cP^+))$. As \eqref{eq:K(u)} implies that $K(\eta(t_0,u))\in\mathrm{int}(B_\alpha(\cP^+))$, we have that
$$\frac{d}{dt}\Big|_{t=t_0}L(\eta(t,u))=L(-\nabla J(\eta(t_0,u))=L(K(\eta(t_0,u))-L(\eta(t_0,u))=L(K(\eta(t_0,u))-\beta>0.$$
But then $L(\eta(t,u))<\beta$ for all $t\in(t_0-\eps,t_0)$ and some $\eps>0$. Thus, $\eta(t,u)\notin B_\alpha(\cP^+)$ for such $t$, contradicting \eqref{eq:positive invariance}. This shows that $B_\alpha(\cP^+)$ is strictly positively invariant and, by symmetry, so is $B_\alpha(\cP^-)$.

$(c)$ is an immediate consequence of $(b)$. 
\end{proof}

\begin{lemma} \label{lem:ps_sequence}
Assume $b\geq 0$ and let $\alpha>0$ be as in \emph{Lemma \ref{lem:invariance}}. Then there exist $u_k\in H^1(M)$ such that 
$$\dist_\mathscr{Q}(u_k,\,\cP^+\cup\cP^-)\geq\alpha, \qquad J(u_k)\to\mu_2(M)\qquad\text{and}\qquad J'(u_k)\to 0.$$
\end{lemma}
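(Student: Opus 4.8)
The plan is to construct the desired Palais--Smale sequence via a standard deformation argument built on the minimax characterization of $\mu_2(M)$ from Lemma \ref{lem:minmax} and the invariance properties established in Lemma \ref{lem:invariance}. Fix $\alpha>0$ as in Lemma \ref{lem:invariance}, and suppose, arguing by contradiction, that the conclusion fails. Then there exists $\eps_0>0$ such that $J$ has no critical point $u$ with $\dist_\mathscr{Q}(u,\cP^+\cup\cP^-)\geq\alpha$, $|J(u)-\mu_2(M)|\leq\eps_0$, and $\|J'(u)\|\leq\eps_0$ (here $\|J'(u)\|$ is the dual norm with respect to $\langle\cdot,\cdot\rangle_\mathscr{Q}$; equivalently $\|\nabla J(u)\|_\mathscr{Q}$). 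In particular $J$ has no sign-changing critical point at level $d$ for any $d\in[\mu_2(M)-\eps_0,\mu_2(M)+\eps_0]$, so by Lemma \ref{lem:invariance}$(c)$ the set $\cZ_d:=B_\alpha(\cP^+)\cup B_\alpha(\cP^-)\cup J^{\leq d}$ is strictly positively invariant and the retraction $\vr_d:\cA(\cZ_d)\to\cZ_d$ is continuous with $\vr_d|_{\cZ_d}=\mathrm{id}$.

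First I would record a \emph{quantitative gradient bound}: by the contradiction hypothesis together with standard compactness of the gradient flow away from the boundary of the domain, there is $c_0>0$ with $\|\nabla J(u)\|_\mathscr{Q}\geq c_0$ for all $u$ in the closed set
$$\cW:=\{u\in H^1(M):\dist_\mathscr{Q}(u,\cP^+\cup\cP^-)\geq\alpha,\ |J(u)-\mu_2(M)|\leq\eps_0\}.$$
Along the flow, $\frac{\d}{\d t}J(\eta(t,u))=-\|\nabla J(\eta(t,u))\|_\mathscr{Q}^2\leq -c_0^2$ as long as the trajectory stays in $\cW$. Hence, starting from any point $u$ with $J(u)\leq\mu_2(M)+\eps$ (for small $\eps\in(0,\eps_0)$) that is not already in $B_\alpha(\cP^+)\cup B_\alpha(\cP^-)$, the flow either enters $B_\alpha(\cP^\pm)$ or drops below level $\mu_2(M)-\eps_0$ in uniformly bounded time; in either case it enters $\cZ_{\mu_2(M)-\eps}$. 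This shows $\cA(\cZ_{\mu_2(M)-\eps})\supseteq J^{\leq\mu_2(M)+\eps}$, and I would set $d_0:=\mu_2(M)-\eps$ with $\eps>0$ small enough that also $d_0>0$ (recall $\mu_1(M)>0$ and $\mu_2(M)\geq 2\mu_1(M)>0$).

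Next, pick $\sigma\in\Sigma$ with $\max_{s,t}J(\sigma(s,t))\leq\mu_2(M)+\eps$, which exists by Lemma \ref{lem:minmax}. The boundary behavior of $\sigma$ should place it, away from $\cE$, inside $\cZ_{d_0}$: on the face $\{0\}\times[0,1]$ we have $\sigma\in\cP^+\subseteq B_\alpha(\cP^+)$, and similarly on $\{1\}\times[0,1]$; on $[0,1]\times\{0\}$ we have $\sigma\equiv 0\in J^{\leq d_0}$ since $d_0>0$; and on $[0,1]\times\{1\}$ we have $J(\sigma(s,1))\leq 0\leq d_0$. Thus the restriction of $\sigma$ to $\partial([0,1]^2)$ takes values in $\cZ_{d_0}$, and all of $\sigma$ takes values in $\cA(\cZ_{d_0})$. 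Define $\widetilde\sigma:=\vr_{d_0}\circ\sigma$. By continuity of $\vr_{d_0}$ and the fact that $\vr_{d_0}$ is the identity on $\cZ_{d_0}$, the map $\widetilde\sigma$ is continuous and agrees with $\sigma$ on $\partial([0,1]^2)$; in particular $\widetilde\sigma$ still satisfies the boundary conditions defining $\Sigma$ (the conditions $J(\widetilde\sigma(s,1))\leq 0$ and $\tau(\widetilde\sigma(s,1))\geq 2$ hold because $\widetilde\sigma(s,1)=\sigma(s,1)$), so $\widetilde\sigma\in\Sigma$. But $\widetilde\sigma$ takes values in $\cZ_{d_0}=B_\alpha(\cP^+)\cup B_\alpha(\cP^-)\cup J^{\leq d_0}$, and by Lemma \ref{lem:invariance}$(a)$ this set is disjoint from $\cE$, contradicting Lemma \ref{lem:minmax}$(b)$. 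This contradiction proves the existence of the sequence $(u_k)$; relabeling along $\eps_0\to 0$ if necessary gives $\dist_\mathscr{Q}(u_k,\cP^+\cup\cP^-)\geq\alpha$, $J(u_k)\to\mu_2(M)$, and $J'(u_k)\to 0$.

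The main obstacle I anticipate is the quantitative gradient bound on $\cW$ and controlling the flow near the ``walls'' $\partial B_\alpha(\cP^\pm)$: one must ensure that a trajectory which has not yet reached level $d_0$ actually enters $B_\alpha(\cP^\pm)$ in finite time rather than lingering forever at distance just above $\alpha$ from $\cP^\pm$ with small gradient. This is handled precisely by Lemma \ref{lem:invariance}$(b)$ (strict positive invariance forces the boundary $\partial B_\alpha(\cP^\pm)$ to be crossed inward, never outward) combined with the uniform energy-decay rate $-c_0^2$ on the complement of $B_\alpha(\cP^+)\cup B_\alpha(\cP^-)$ at energies near $\mu_2(M)$; the compactness underlying $c_0>0$ is the usual one for the flow of $\nabla J=\mathrm{id}-K$ with $K$ compact. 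A secondary technical point is checking $\mu_2(M)>0$ so that the face $[0,1]\times\{0\}$, where $\sigma\equiv 0$, lies in $J^{\leq d_0}$; this follows from $\mu_2(M)\geq 2\mu_1(M)$ together with \eqref{eq:Q_mu1}.
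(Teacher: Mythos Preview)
Your proposal is correct and follows essentially the same route as the paper: negate the conclusion to obtain a uniform lower bound $\|\nabla J(u)\|_\mathscr{Q}\geq c_0$ on $J^{-1}[c-\eps,c+\eps]\smallsetminus\mathrm{int}(B_\alpha(\cP^+)\cup B_\alpha(\cP^-))$, push a near-optimal $\sigma\in\Sigma$ along the flow into $\cZ_{c-\eps}$, and contradict Lemma~\ref{lem:minmax}$(b)$ via Lemma~\ref{lem:invariance}$(a)$. Two minor slips worth fixing: in your negation you write ``no \emph{critical point} $u$'' where you mean ``no \emph{point} $u$'' (the correct negation is that no $u$ simultaneously satisfies the three conditions, which is exactly your quantitative bound on $\cW$), and that bound follows directly from negating the conclusion---no appeal to compactness of $K$ is needed.
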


\begin{proof}
Set $c:=\mu_2(M)$. Arguing by contradiction, we assume the statement is not true. Then, there exists $0<\eps<c$ such that
\begin{equation}
\|\nabla J(u)\|_\mathscr{Q}\geq \eps\quad\text{for every \ }u\in J^{-1}[c-\eps,c+\eps]\smallsetminus \mathrm{int}(B_\alpha(\cP^+)\cup B_\alpha(\cP^-)).  
\end{equation}
It follows that, for each $u\in J^{-1}[c-\eps,c+\eps]\smallsetminus \mathrm{int}(B_\alpha(\cP^+)\cup B_\alpha(\cP^-))$, there exists $t\in[0,T(u))$ such that 
$\eta(t,u)\in\partial\cZ_{c-\eps}$, where $\cZ_d:=B_\alpha(\cP^+)\cup B_\alpha(\cP^-)\cup J^{\leq d}$. Hence, $\cZ_{c+\eps}\subset\cA(\cZ_{c-\eps})$ and, by Lemma \ref{lem:invariance}$(c)$, the map
$$\vr_{c-\eps}:\cZ_{c+\eps}\to\cZ_{c-\eps},\qquad \vr_{c-\eps}(u):=\eta(e_{\cZ_{c-\eps}},u),$$
is well defined and continuous and satisfies $\vr_{c-\eps}(u)=u$ if $u\in\cZ_{c-\eps}$.

Fix $u\in\cE$ such that $J(u)\leq c+\eps$ and take $\sigma_u\in \Sigma$ as in Lemma \ref{lem:minmax}$(a)$. As $J(\sigma_u(s,t))\leq c+\eps$, the map $\sigma:=\vr_{c-\eps}\circ\sigma_u$ is well defined and continuous and, since $\vr_{c-\eps}(u)=u$ for every $u\in\cP^+\cup \cP^-\cup J^0$, we have that $\sigma\in\Sigma$. Thus, by Lemma \ref{lem:minmax}$(b)$, there exists $(s_0,t_0)\in[0,1]\times[0,1]$ such that $\sigma(s_0,t_0)\in\cE$. But Lemma \ref{lem:invariance}$(a)$ and the definition of $c:=\mu_2(M)$ imply that $\sigma(s,t)\in\cZ_{c-\eps}\subset H^1(M)\smallsetminus\cE$ for all $(s,t)\in [0,1]\times[0,1]$. This is a contradiction. 
\end{proof}

\begin{lemma} \label{lem:hl}
Let $(u_k)$ be a sequence in $H^1(M)$ such that $u_k\rh 0$ weakly but not strongly in $H^1(M)$, $J(u_k)\to c<\mu_1(M)+\mu_1(B_b)$ and $J'(u_k)\to 0$ in $H^{-1}(M)$. Then, $c\geq\mu_1(B_b)$ and, after passing to a subsequence, $\dist_\mathscr{Q}(u_k,\,\cP^+\cup\cP^-)\to0$.
\end{lemma}

We postpone the proof of this lemma to Appendix \ref{app:lemma hl}.

\begin{theorem} \label{thm:mu_2}
If $b\geq 0$ and
$$\mu_2(M)<\mu_1(M)+\mu_1(B_b),$$ 
then there exists $u\in\cE$ such that $J(u)=\mu_2(M)$ and $J'(u)=0$, that is, $u$ is a least energy sign-changing solution of \eqref{problem}.
\end{theorem}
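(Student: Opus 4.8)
The plan is to extract a bounded Palais--Smale sequence for $J$ at the level $\mu_2(M)$ that stays uniformly away from the one-signed cones $\cP^\pm$, pass to a weak limit $u$, and then exploit the strict energy gap $\mu_2(M)<\mu_1(M)+\mu_1(\mathbb{B}^n)$ together with Lemma~\ref{lem:escobar} to rule out any concentration in the positive or negative part of $u$. First, Lemma~\ref{lem:ps_sequence} produces $u_k\in H^1(M)$ with $\dist_\mathscr{Q}(u_k,\cP^+\cup\cP^-)\geq\alpha$, $J(u_k)\to\mu_2(M)$ and $J'(u_k)\to0$. The identity $J(u_k)-\frac1q J'(u_k)u_k=\frac{q-2}{2q}\mathscr{Q}u_k$ together with \eqref{eq:Q_norm} shows that $(u_k)$ is bounded in $H^1(M)$, so, after passing to a subsequence, $u_k\rightharpoonup u$ in $H^1(M)$, $u_k\to u$ in $L^2(M)$, $u_k|_{\partial M}\to u|_{\partial M}$ in $L^2(\partial M)$ (compactness of the subcritical trace embedding), and $u_k\to u$ a.e.\ on $M$ and on $\partial M$. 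Since $|u_k|^{q-2}u_k$ is bounded in $L^{q/(q-1)}(\partial M)$ and converges a.e.\ to $|u|^{q-2}u$, one may pass to the limit in $J'(u_k)v$ for fixed $v\in H^1(M)$ and obtain $J'(u)=0$; and, as $|u_k^\pm-u^\pm|\le|u_k-u|$, also $u_k^\pm\rightharpoonup u^\pm$ in $H^1(M)$ with $u_k^\pm\to u^\pm$ in $L^2(M)$ and in $L^2(\partial M)$.

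Next I would record the bookkeeping of the sign decomposition. Since $u_k^+u_k^-\equiv0$ and $\nabla u_k^+\cdot\nabla u_k^-=0$ a.e., we have $\langle u_k^+,u_k^-\rangle_\mathscr{Q}=0$, hence $\mathscr{Q}u_k=\mathscr{Q}u_k^++\mathscr{Q}u_k^-$ and $\idm|u_k|^q\d\sigma=\idm|u_k^+|^q\d\sigma+\idm|u_k^-|^q\d\sigma$; and, since $(u_k^\pm)$ is bounded, $J'(u_k)u_k^\pm=\mathscr{Q}u_k^\pm-\idm|u_k^\pm|^q\d\sigma\to0$. Combining these, $J(u_k)=\frac{q-2}{2q}\big(\mathscr{Q}u_k^++\mathscr{Q}u_k^-\big)+o(1)$, so after a further subsequence $\mathscr{Q}u_k^\pm\to a^\pm$ with $a^++a^-=\frac{2q}{q-2}\mu_2(M)$. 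Moreover $\dist_\mathscr{Q}(u_k,\cP^+)\le\|u_k^-\|_\mathscr{Q}=\sqrt{\mathscr{Q}u_k^-}$ forces $\mathscr{Q}u_k^-\ge\alpha^2$, whence $\idm|u_k^-|^q\d\sigma=\mathscr{Q}u_k^-+o(1)\ge\alpha^2/2$ for large $k$ and, symmetrically, $\idm|u_k^+|^q\d\sigma\ge\alpha^2/2$. In particular the traces $u_k^\pm|_{\partial M}$ are nontrivial, $t_{u_k^\pm}\to1$ by \eqref{eq:t_u}, and $J(t_{u_k^\pm}u_k^\pm)=\frac{q-2}{2q}\mathscr{Q}u_k^\pm+o(1)\to\frac{q-2}{2q}a^\pm$.

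The decisive step is a dichotomy for each sign. If $u^+\neq0$, then testing $J'(u)=0$ against $u^+$ gives $\mathscr{Q}u^+=\idm|u^+|^q\d\sigma$, i.e.\ $u^+\in\cN$, so by weak lower semicontinuity of $\mathscr{Q}$ one has $\frac{q-2}{2q}a^+\ge\frac{q-2}{2q}\mathscr{Q}u^+=J(u^+)\ge\mu_1(M)$. If instead $u^+=0$, then $u_k^+\to0$ in $L^2(M)$ and in $L^2(\partial M)$ while $\idm|u_k^+|^q\d\sigma\ge\alpha^2/2$, so Lemma~\ref{lem:escobar} applied to $(u_k^+)$ yields $\frac{q-2}{2q}a^+=\lim_k J(t_{u_k^+}u_k^+)\ge\mu_1(\mathbb{B}^n)$; the same alternative holds for $u^-$. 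Since $\mu_1(M)\le\mu_1(\mathbb{B}^n)$, if at least one of $u^\pm$ vanished we would obtain $\mu_2(M)=\frac{q-2}{2q}(a^++a^-)\ge\mu_1(M)+\mu_1(\mathbb{B}^n)$, contradicting the hypothesis. Hence $u^+\neq0$ and $u^-\neq0$; both belong to $\cN$, so $u=u^++u^-\in\cE$ and therefore $J(u)\ge\mu_2(M)$, while on the other hand $J(u)=\frac{q-2}{2q}(\mathscr{Q}u^++\mathscr{Q}u^-)\le\frac{q-2}{2q}(a^++a^-)=\mu_2(M)$. Thus $J(u)=\mu_2(M)$, $J'(u)=0$, and $u$ changes sign, which is the claimed conclusion.

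The main obstacle is the loss of compactness caused by the critical trace nonlinearity: only the subcritical trace embedding $H^1(M)\hookrightarrow L^2(\partial M)$ is compact, so one cannot pass to the limit directly in $\idm|u_k|^q\d\sigma$, and the whole argument hinges on showing --- through the energy gap and Escobar's refined Sobolev--trace inequality encoded in Lemma~\ref{lem:escobar} --- that neither $u_k^+$ nor $u_k^-$ can develop a boundary bubble. The more routine but still necessary technical points are the recovery of $J'(u)=0$ from weak convergence and the estimate that rescaling $u_k^\pm$ back onto $\cN$ changes the energy only by $o(1)$.
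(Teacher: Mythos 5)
Your proposal is correct and follows essentially the same route as the paper: the Palais--Smale sequence from Lemma~\ref{lem:ps_sequence}, boundedness via $J(u_k)-\tfrac1q J'(u_k)u_k$, the lower bound $\mathscr{Q}u_k^\pm\geq\alpha^2$ from the distance condition, $t_{u_k^\pm}\to1$, and the energy gap combined with Lemma~\ref{lem:escobar} to exclude $u^+=0$ or $u^-=0$, then passage to the limit to get $J'(u)=0$ and $J(u)=\mu_2(M)$. The only (harmless) difference is bookkeeping: you obtain the bound $\tfrac{q-2}{2q}a^\pm\geq\mu_1(M)$ in the nonvanishing case from the limit equation and weak lower semicontinuity of $\mathscr{Q}$, whereas the paper gets $\liminf_k J(u_k^\pm)\geq\mu_1(M)$ directly from $t_{u_k^\pm}u_k^\pm\in\cN$.
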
 

\begin{proof}
By Lemma \ref{lem:ps_sequence} there is a sequence $(u_k)$ in $H^1(M)$ such that 
\begin{equation} \label{eq:ps}
\dist_\mathscr{Q}(u_k,\,\cP^+\cup\cP^-)\geq\alpha, \qquad J(u_k)\to\mu_2(M)\qquad\text{and}\qquad J'(u_k)\to 0.
\end{equation}
Then,
$$\mu_2(M)+o(1)+o(1)\|u_k\|_\mathscr{Q}\geq J(u_k)-\frac{1}{q} J'(u_k)u_k\geq\frac{q-2}{2q}\mathscr{Q}u_k=\frac{q-2}{2q}\|u_k\|_\mathscr{Q}^2.$$
It follows that $(u_k)$ is bounded in $H^1(M)$ and, passing to a subsequence, $u_k\rh u$ weakly in $H^1(M)$. A standard argument shows that
$$J'(u_k)\vp\to J'(u)\vp\qquad\text{for all \ }\vp\in \cC^\infty(M),$$
and, as  and $J'(u_k)\to 0$ in $H^{-1}(M)$, we have that $J'(u)=0$. It is also standard to verify (see, for instance, \cite[Section 8.3]{w}) that
\begin{align*}
J(u_k)&=J(u_k-u)+J(u)+o(1),\\
o(1)=J'(u_k)&=J'(u_k-u)+J'(u) + o(1)=J'(u_k-u)+o(1)\quad\text{in \ }H^{-1}(M).
\end{align*}
Set $v_k:=u_k-u$. Then $v_k\rh 0$ weakly in $H^1(M)$, $J(v_k)\to c-J(u)$ and $J'(v_k)\to 0$ in $H^{-1}(M)$. Assume first that $(v_k)$ does not converge strongly to $0$. We consider two cases: If $u\neq 0$, then $u\in\cN$. Hence, $J(v_k)\to c-J(u)\leq c-\mu_1(M)<\mu_1(B_b)$, contradicting Lemma \ref{lem:hl}. If, on the other hand, $u= 0$, then $v_k=u_k$. So $\dist_\mathscr{Q}(v_k,\,\cP^+\cup\cP^-)\geq\alpha>0$, which again contradicts Lemma \ref{lem:hl}. 

This shows that $u_k\to u$ strongly in $H^1(M)$. Therefore, $J(u)=\mu_2(M)$, $u\notin\cP^+\cup\cP^-$ and $J'(u^\pm)[u^\pm]=J'(u)[u^\pm]=0$. The last two statements say that $u\in\cE$. 
\end{proof}

Theorem \ref{thm:mu_2} yields the following existence criterion.

\begin{corollary}\label{cor:mu_2a}
Let $b\geq 0$. If there exist $u_0,u_1\in H^1(M)$ such that $u_0,u_1\geq 0$, $u_s:=(1-s)u_0-su_1$ is nontrivial for all $s\in[0,1]$ and 
$$J(ru_s)< \mu_1(M)+\mu_1(B_b)\qquad\text{for all \ }s\in[0,1]\text{ \ and \ }r\geq 0,$$ 
then the problem \eqref{problem} has a least energy sign-changing solution.
\end{corollary}

\begin{proof}
Let $R>0$ and set $\sigma(s,t):=Rtu_s$. Then, $\sigma(0,t)\in\cP^+$ and $\sigma(1,t)\in\cP^-$ for all $t\in[0,1]$, and $\sigma(0,0)=0$. Since $u_s$ is nontrivial,
\begin{align*}
&J(Ru_s)=\frac{R^2}{2}\mathscr{Q}[u_s]-\frac{R^pa_n}{p}\im|u_s|^p\dv-\frac{R^qb}{q}\idm|u_s|^q\d \sigma\to-\infty\qquad\text{as \ }R\to\infty,
\end{align*}
for each $s\in[0,1]$. So we may choose $R>0$ large enough so that $J(\sigma(s,1))\leq 0$ and $\tau(\sigma(s,1))\geq 2$ for all $s\in[0,1]$. Then, $\sigma\in\Sigma$ and, by Lemma \ref{lem:minmax}$(b)$, there exists $(s_0,t_0)\in(0,1)\times(0,1)$ such that $J(\sigma(s_0,t_0))\in\cE$. Using the hypothesis, we get
$$\mu_2(M)\leq J(\sigma(s_0,t_0))< \mu_1(M)+\mu_1(B_b),$$
and the statement follows from Theorem \ref{thm:mu_2}.
\end{proof}

\section{The energy of the test function}
\label{sec:test function}

We assume that $n\geq 7$ and that $M$ has a nonumbilic point $\xi\in\partial M$. As before, we set $p:=\frac{2n}{n-2}$ and $q:=\frac{2(n-1)}{n-2}$.

Let $\rn_+:=\{x=(x',x_n)\in\r^{n-1}\times\r\equiv\rn:x_n>0\}$. The \emph{bubble} 
$$U(x):=\left(\frac{1}{1+|x'|^2+|x_n+\frac{b}{n-2}|^2}\right)^{(n-2)/2}$$
is a solution to the problem
\begin{equation*}
\begin{cases}
-\Delta u = n(n-2)u^{p-1}&\text{in \ }\rn_+ \\
\frac{\partial u}{\partial\nu}=-bu^{q-1} &\text{on \ }\partial\rn_+.
\end{cases}
\end{equation*}
Since the spherical cap $B_b$ (introduced after Proposition \ref{prop:mu is invariant}) is conformally equivalent to $\rn_+$ via the stereographic projection $\pi_b$ and $\mu_1$ is a conformal invariant, we have that
$$\mu_1(B_b)=J(U)=\frac{1}{2}\int_{\rn_+}|\nabla U|^2\d x -\frac{a_n}{p}\int_{\rn_+}U^p\d x-\frac{b}{q}\int_{\partial\rn_+}U^q\d s.$$

As in Escobar \cite[Section 3]{e1}, after a conformal change of metric, we may assume that $h(\xi)=0$ and $R_{ij}(\xi)=0$. We take normal coordinates $x=(x_1,\ldots,x_n)$ around $\xi$ such that $\nu(\xi)=-\frac{\partial}{\partial x_n}$ and the second fundamental form of $\partial M$ at $\xi$ has diagonal form. The diagonal elements are the principal curvatures $\lambda_1,\ldots,\lambda_{n-1}$ and $\partial M$ is given locally around $\xi$ by the equation
$$x_n=\frac{1}{2}\sum_{i=1}^{n-1}\lambda_ix_i^2 + O(|x|^3).$$
Since $\xi$ is not umbilic, not all principal curvatures are zero. Following Han and Li \cite{hl} we consider a test function of the form
\begin{equation*}
W_\eps(x):=\eps^{-\frac{n-2}{2}}\psi(x)\Big[U\Big(\frac{x}{\eps}\Big)+\delta\phi\Big(\frac{x}{\eps}\Big)\Big],
\end{equation*} 
where $\eps$ and $\delta$ are small parameters, $\psi$ is a suitable cut-off function that takes the value $1$ near $0$ and $\phi\in\cC^\infty_c(\rn_+)$. It is shown in \cite{hl} that, for $n\geq 5$, 
\begin{equation*} 
\max_{t\in[0,1]}J(tW_\eps)=\mu_1(B_b)+Q_1(\phi)\eps\delta + Q_2(\phi)\delta^2 + Q_3\eps^2 + o(\eps^2+\delta^2),
\end{equation*}
where $Q_1(\phi)$ is a linear function of $\phi$, $Q_2(\phi)$ is a quadratic funtion of $\phi$ and $Q_3$ is a number, explicitely given in \cite[(2.4), (2.5) and (2.9)]{hl}. They do not depend on $\eps,\delta$, and are such that $Q_2(\phi)\geq 0$ for any choice of $\phi$ and $Q_3>0$ if $b>0$. Han and Li showed that $\phi$ can be chosen to satisfy
$$Q_1^2(\phi)-4Q_2(\phi)Q_3>0.$$
For such $\phi$, we may take $\kappa\in\r$ such that $C_0:=-(Q_1(\phi)\kappa + Q_2(\phi)\kappa^2 + Q_3)>0$, and setting $\delta:=\kappa\eps$ we get
\begin{align} \label{eq:estimate W}
\max_{t\in[0,1]}J(tW_\eps)&=\mu_1(B_b)+(Q_1(\phi)\kappa + Q_2(\phi)\kappa^2 + Q_3)\eps^2 + o(\eps^2+\delta^2)\nonumber\\
&=\mu_1(B_b)-C_0\eps^2+ o(\eps^2)
\end{align}
for all small enough $\eps>0$. This estimate yields the existence of a smooth positive solution $u_0\in\cN$ to \eqref{problem} that satisfies
\begin{equation}\label{eq:J(u_0)}
J(u_0)=\mu_1(M),
\end{equation}
as shown by Han and Li in \cite{hl}.

\begin{proof}[Proof of Theorem \ref{thm:main}]
Let $b\geq 0$. We will apply Corollary \ref{cor:mu_2a} to the functions $u_0$ and $W_\eps$ defined above. First we observe that, since $\phi$ has compact support and recalling that $\delta:=\kappa\eps$, we have that $W_\eps\ge0,$ if $\eps$ is sufficiently small (see \cite[p. 822]{hl}). Next, we will show that, for some $\eps>0$ small enough,
\begin{equation}\label{eq:claim 1}
J(su_0-tW_\eps)<\mu_1(M)+\mu_1(B_b)\qquad\text{for all \ }s,t\in[0,r] \text{ with } s+t=r\text{ \ and all \ }r\in[0,R],
\end{equation}
where $R>0$ is chosen large enough so that $J(su_0-tW_\eps)\leq 0$ for all $s,t\in[0,r]$ with  $s+t=r\geq R$ and every $\eps>0$ sufficiently small.

Since $u_0$ solves \eqref{problem}, it follows from Lemma \ref{lem:nehari}$(c)$, and estimates \eqref{eq:estimate W} and \eqref{eq:J(u_0)} that, for $\eps$ sufficiently small,
\begin{align} \label{eq:energy}
&J(su_0-tW_\eps)=\frac{1}{2}\mathscr{Q}[su_0-tW_\eps]-\frac{a_n}{p}\im|su_0-tW_\eps|^p\dv-\frac{b}{q}\idm|su_0-tW_\eps|^q\d\sigma \nonumber \\
&=J(su_0)+J(tW_\eps)-\langle su_0,tW_\eps\rangle_\mathscr{Q}-\frac{a_n}{p}\im\Big(|su_0-tW_\eps|^p-|su_0|^p-|tW_\eps|^p\Big)\dv\nonumber\\
&\qquad-\frac{b}{q}\idm\Big(|su_0-tW_\eps|^q-|su_0|^q-|tW_\eps|^q\Big)\d\sigma  \nonumber \\
&\leq \mu_1(M)+\mu_1(B_b)-C_0\eps^2+o(\eps^2)-\langle su_0,tW_\eps\rangle_\mathscr{Q}-\frac{a_n}{p}\im\Big(|su_0-tW_\eps|^p-|su_0|^p-|tW_\eps|^p\Big)\dv\nonumber\\
&\qquad
-\frac{b}{q}\idm\Big(|su_0-tW_\eps|^q-|su_0|^q-|tW_\eps|^q\Big)\d\sigma\nonumber \\
&\leq  \mu_1(M)+\mu_1(B_b)-C_0\eps^2+o(\eps^2) +C_1R^q
\im\Big(u_0^{p-1}(U_\eps +\delta|\phi_\eps|)+u_0\left(U_\eps^{p-1}+\delta^{p-1}|\phi_\eps|^{p-1}\right)\Big)\dv\nonumber\\
&\qquad+C_1R^q
\idm\Big(u_0^{q-1}(U_\eps +\delta|\phi_\eps|) +u_0\left(U_\eps^{q-1}+\delta^{q-1}|\phi_\eps|^{q-1}\right)\Big)\d\sigma.
\end{align}
Here used the inequality
$$|a-b|^r-a^r-b^r\geq-r(ab^{r-1}+a^{r-1}b)\qquad\text{for all \ }a,b\geq 0\text{ \ and \ } r\geq 2,$$ 
and we have written $W_\eps$ as
$$W_\eps(x)=  \underbrace{\frac 1{\eps^{n-2\over2}}\psi(x)U\left(\frac x\eps\right)}_{=:U_\eps(x)}+ \delta \underbrace{\frac 1{\eps^{n-2\over2}}\psi(x)\phi\left(\frac x\eps\right)}_{=:\phi_\eps(x)}.$$
To obtain the conclusion we must estimate the last two integrals. In the following, $C$ will denote different positive constants independent of $\eps$. We obtain
\begin{align*}
\im u_0U_\eps^{p-1}\d\sigma &\leq C\int_{\r^{n }}U_\eps(x )^{p-1}\d x\leq   C\int_{\r^{n }}\left(\frac{\eps}{\eps^2+|x|^2}\right)^{(n+2)/2}\d x  \\
& \leq C\int_{\r^{n }}\left(\frac{1}{\eps(1+|y|^2)}\right)^{(n+2)/2}\eps^{n }\d y \leq C\eps^{(n-2)/2}=o(\eps^2)\quad\text{if \ }n\geq 7.
\end{align*}
\begin{align*}
\im u_0^{p-1}U_\eps\d\sigma &\leq C\int_{B(0,\rho)}U_\eps(x ) \d x\leq   C\int_{B(0,\rho)}\left(\frac{\eps}{\eps^2+|x|^2}\right)^{(n-2)/2}\d x \\
& \leq C\eps^{(n-2)/2}\int_{B(0,\rho)} \frac{1}{|x|^{n-2}}\d x \leq C\eps^{(n-2)/2}=o(\eps^2)\quad\text{if \ }n\geq 7.
\end{align*}
Next we estimate the boundary integrals using Fermi coordinates $(x,t)\in\mathbb R^{n-1}\times (0,\infty)$ at $\xi$. 
\begin{align*}
\idm u_0U_\eps^{q-1}\d\sigma &\leq C\int_{\r^{n-1}}U_\eps(x,0)^{q-1}\d x\leq C\int_{\r^{n-1}}\left(\frac{\eps}{\eps^2+|x|^2}\right)^{n/2}\d x \\
& \leq C\int_{\r^{n-1}}\left(\frac{1}{\eps(1+|y|^2)}\right)^{n/2}\eps^{n-1}\d y \leq C\eps^{(n-2)/2}=o(\eps^2)\quad\text{if \ }n\geq 7.
\end{align*}
\begin{align*}
\im u_0^{p-1}U_\eps\d\sigma &\leq C\int_{B(0,\rho)\cap \partial \mathbb R^n_+}U_\eps(x,0 ) \d x\leq   C\int_{B(0,\rho)\cap \partial \mathbb R^n_+} \left(\frac{\eps}{\eps^2+|x|^2}\right)^{(n-2)/2}\d x \\
& \leq C\eps^{(n-2)/2}\int_{B(0,\rho)\cap \partial \mathbb R^n_+} \frac{1}{|x|^{n -2}}\d x \leq C\eps^{(n-2)/2}=o(\eps^2)\quad\text{if \ }n\geq 7.
\end{align*}
Finally, we estimate the terms which involve $\phi_\eps$. Since $\phi$ has compact support we have for some $C>0$
$$\im \phi_\eps(x)^\alpha \d x=\int\limits_{\{|x/\eps|\le c\}}
\left|\frac1{\eps^{n-2\over2}}\phi\left(\frac x\eps\right)\right|^\alpha dx\leq C
\eps^{n-\alpha{n-2\over2}}
\int\limits_{\{|y|\le c\}}
\left| \phi\left(y\right)\right|^\alpha dy\leq  C
\eps^{n-\alpha{n-2\over2}}
$$
and
$$\idm \phi_\eps(x)^\alpha \d x=\int\limits_{\{|x/\eps|\le c\}\cap\partial\mathbb R^n_+}
\left|\frac1{\eps^{n-2\over2}}\phi\left(\frac x\eps\right)\right|^\alpha dx\leq C
\eps^{n-1-\alpha{n-2\over2}}
\int\limits_{\{|y|\le c\}\cap\partial\mathbb R^n_+}
\left| \phi\left(y\right)\right|^\alpha dy\leq  C
\eps^{n-1-\alpha{n-2\over2}}.
$$
Therefore
\begin{align*}
&\im\Big(u_0^{p-1}  \delta|\phi_\eps|+u_0  \delta^{p-1}|\phi_\eps|^{p-1} \Big)\dv + 
\idm\Big(u_0^{q-1} \delta|\phi_\eps| +u_0 \delta^{q-1}|\phi_\eps|^{q-1} \Big)\d\sigma\\
&\leq C \left(\delta\eps^{n+2\over2}+\delta^{n+2\over n-2}\eps^{n-2\over2}
+\delta \eps^{\frac n2}+\delta^{\frac n{n-2}}\eps^{n-2\over2}\right)
 =o(\eps^2).
\end{align*}
Combining the previous estimates with \eqref{eq:energy} we deduce
$$J(su_0-tW_\eps)\leq  \mu_1(M)+\mu_1(B_b)-C_0\eps^2+o(\eps^2) $$
and the claim follows, provided $\eps$ is small enough.
\end{proof}

\appendix

\section{The proof of Lemma \ref{lem:hl}}
\label{app:lemma hl}

The classical (and complete) version of Lemma \ref{lem:hl} is due to Struwe \cite{s}. For closed manifolds Druet, Hebey, and Robert proved a similar result in \cite[Lemma 3.2]{dhr}. The first statement of Lemma \ref{lem:hl} was proven by Han and Li in \cite[Lemma A.2]{hl}. We present a summary of their proof and follow some arguments in \cite{dhr} to obtain the second statement.

As before, we set $p:=\frac{2n}{n-2}$ and $q:=\frac{2(n-1)}{n-2}$.

\begin{proof}[Proof of Lemma \ref{lem:hl}] 
Let $(u_k)$ be a sequence in $H^1(M)$ such that $u_k\rh 0$ weakly but not strongly in $H^1(M)$, $J(u_k)\to c<\mu_1(M)+\mu_1(B_b)$ and $J'(u_k)\to 0$ in $H^{-1}(M)$. Passing to a subsequence we have that $u_k\to0$ strongly in $L^2(M)$ and $u_k|_{\partial M}\to0$ strongly in $L^2(\partial M)$. It follows that
$$0<\kappa_0\leq\|u_k\|^2=\|u_k\|_\mathscr{Q}^2+o(1)=F(u_k)+o(1):=a_n\im |u_k|^p \dv + b\idm|u_k|^q\d\sigma+o(1).$$
Consider the concentration function
$$Q_i(r):=\max_{x\in \overline{M}}\left\{a_n\int_{B_r(x)} |u_i|^p \dv + b\int_{B_r(x)\cap\partial M}|u_i|^q\d\sigma\right\},\qquad r\geq0,$$
where $B_r(x)$ is the open ball in $\overline{M}$ of radius $r$ and center $x$. Given $\kappa\in(0,\kappa_0)$, let $r_i>0$ and $\bar{x}_i\in M$ be such that
\begin{equation} \label{eq:kappa}
\kappa=Q_i(r_i)=a_n\int_{B_{r_i}(\bar{x}_i)} |u_k|^p \dv + b\int_{B_{r_i}(\bar{x}_i)\cap\partial M}|u_k|^q\d\sigma.
\end{equation}
After passing to a subsequence, $\bar{x}_i\to\bar{x}\in \overline{M}$, and fixing $\kappa$ sufficiently small and following the argument in \cite[(A.12) and (A.13)]{hl} one shows that $r_i\to 0$.

Take geodesic normal coordinates at $\bar{x}_i$, set $\tilde{M}_i:=\{z\in\rn:\exp_{\bar{x}_i}(r_iz)\in \overline{M}\text{ and }|r_iz|<\iota_g/2\}$, where $\iota_g$ is the injectivity radius of $M$, and define
$$\tilde{u}_i(z):=r_i^\frac{n-2}{2} u_i(\exp_{\bar{x}_i}(r_iz)),\qquad z\in\tilde{M}_i.$$
Then, after passing to a subsequence, we may choose suitable $R_i<\iota_g/2r_i$ such that $R_i\to\infty$ and
\begin{equation}\label{eq:annulus}
\lim_{i\to\infty}\left[\int_{A_i}\big(|\nabla_{\tilde{g}_i}\tilde{u}_i|^2+|\tilde{u}_i|^p\big)\dv_{\tilde{g}_i}+\int_{A_i\cap\partial'\tilde{M}_i}|\tilde{u}_i|^q\d\sigma_{\tilde{g}_i}\right]=0,
\end{equation}
where $A_i:=\{z\in\tilde{M}_i:R_i<|z|<2R_i\}$, $\partial'\tilde{M}_i:=\{z\in\rn:\exp_{\bar{x}_i}(r_iz)\in\partial M\text{ and }|r_iz|<\iota_g/2\}$ and $\tilde{g}_i$ denotes the metric $(\tilde{g}_i)_{\alpha\beta}(z)=g_{\alpha\beta}(r_iz)$. Next, we choose a smooth cut-off function $\tilde{\eta}_i\in\cC^\infty_c(\rn)$ such that  $\tilde{\eta}_i(z)\in[0,1]$, $\tilde{\eta}_i(z)=1$ if $|z|<R_i$, $\tilde{\eta}_i(z)=0$ if $|z|>2R_i$ and $|\nabla\tilde{\eta}_i(z)|\leq C/R_i$.

Set $\tilde{u}_i^{(1)}:=\tilde{\eta}_i\tilde{u}_i$ and $\tilde{u}_i^{(2)}:=\tilde{u}_i - \tilde{u}_i^{(1)}$ and define
$$u_i^{(1)}(\exp_{\bar{x}_i}(y)):=r_i^{-\frac{n-2}{2}}\tilde{u}_i^{(1)}(y/r_i)=\tilde{\eta}_i(y/r_i)u_i(\exp_{\bar{x}_i}(y)),\qquad u_i^{(2)}:=u_i - u_i^{(1)}.$$
These functions satisfy
\begin{align*}
&u_i^{(1)}\rh 0\text{ \ and \ }u_i^{(2)}\rh 0\quad\text{weakly in \ } H^1(M), \\
&J(u_i)=J(u_i^{(1)})+J(u_i^{(2)})+o(1),\\
&J'(u_i)=J'(u_i^{(1)})+J'(u_i^{(2)})+o(1)\quad \text{in \ }H^{-1}(M), \\
&J'(u_i^{(1)})u_i^{(1)}=o(1)\text{ \ and \ }J'(u_i^{(2)})u_i^{(2)}=o(1);
\end{align*}
see \cite[Proof of Lemma A.2]{hl}. It follows that
$$c^{(1)}:=\lim_{i\to\infty}J(u_i^{(1)})\geq0\text{ \ \ and \ \ }c^{(2)}:=\lim_{i\to\infty}J(u_i^{(2)})\geq0.$$

Let $\tilde{u}^{(1)}$ be the weak limit of $(\tilde{u}_i^{(1)})$ in $H^1_{loc}$. From equations \eqref{eq:kappa} and \eqref{eq:annulus} we see that
\begin{align*}
&\sup_{z\in\tilde{M}_i}\left[\int_{B_1(z)\cap\tilde{M}_i}|\tilde{u}_i^{(1)}|^p + \int_{B_1(z)\cap\partial'\tilde{M}_i}|\tilde{u}_i^{(1)}|^q\right]+o(1)\leq\kappa \\
&\qquad\qquad=\int_{B_1(0)\cap\tilde{M}_i}|\tilde{u}_i^{(1)}|^p + \int_{B_1(0)\cap\partial'\tilde{M}_i}|\tilde{u}_i^{(1)}|^q + o(1).
\end{align*}
A standard argument shows that $\tilde{u}^{(1)}\neq 0$; see \cite[Proof of Lemma A.2]{hl} or \cite[Proof of Lemma 3.2]{dhr}. Given $\tilde{\vp}\in\cC^\infty_c(\rn)$, set $\vp_i(\exp_{\bar{x}_i}(y)):=r_i^{(2-n)/2}\tilde{\vp}(r_i^{-1}y)$. Then,
\begin{align}\label{eq:solution}
o(1)&=J'(u_i)\vp_i=J'(u_i^{(1)})\vp_i + o(1)\|\vp_i\| \nonumber\\
&=\int_{\tilde{M}_i}\Big(\nabla_{\tilde{g}_i}\tilde{u}_i^{(1)}\nabla_{\tilde{g}_i}\tilde{\vp}+c_nR_{\tilde{g}_i}\tilde{u}_i^{(1)}\tilde{\vp}\Big)\dv_{\tilde{g}_i}+d_n\int_{\partial'\tilde{M}_i}h_{\tilde{g}_i}\tilde{u}_i^{(1)}\tilde{\vp}\d\sigma_{\tilde{g}_i}\nonumber \\
&\quad -a_n\int_{\tilde{M}_i}|\tilde{u}_i^{(1)}|^{p-2}\tilde{u}_i^{(1)}\tilde{\vp}\dv_{\tilde{g}_i} - b\int_{\partial'\tilde{M}_i}|\tilde{u}_i^{(1)}|^{q-2}\tilde{u}_i^{(1)}\tilde{\vp}\d\sigma_{\tilde{g}_i} + o(1)\|\vp_i\|.
\end{align}
Let $d:=\lim_{i\to\infty}r_i^{-1}\dist_g(\bar{x}_i,\partial M)$. We consider two cases:

$(1):$ \ If $d\in\r$ then, passing to the limit in \eqref{eq:solution}, we get that
$$J_{\mathbb{H}}'(\tilde{u}^{(1)})\tilde{\vp}=\int_{\mathbb{H}}\nabla\tilde{u}^{(1)}\nabla\tilde{\vp}-a_n\int_{\mathbb{H}}|\tilde{u}^{(1)}|^{p-2}\tilde{u}^{(1)}\tilde{\vp}-b\int_{\partial\mathbb{H}}|\tilde{u}^{(1)}|^{q-2}\tilde{u}^{(1)}\tilde{\vp}=0,$$
where, up to a rotation, $\mathbb{H}:=\{z\in\rn:z_n>-d\}$ and
$$J_{\mathbb{H}}(u):=\frac{1}{2}\int_{\mathbb{H}}|\nabla u|^2-\frac{a_n}{p}\int_{\mathbb{H}}|u|^p-\frac{b}{q}\int_{\partial\mathbb{H}}|u|^q$$ 
is the variational functional for the problem
\begin{equation}\label{eq:prob_H}
\begin{cases}
-\Delta u=a_n|u|^{p-2}u &\text{in \ }\mathbb{H},\\
\frac{\partial u}{\partial z_n}=-b|u|^{q-2}u &\text{on \ }\partial\mathbb{H}.
\end{cases}
\end{equation}
Therefore, $\tilde{u}^{(1)}$ is a nontrivial solution to \eqref{eq:prob_H} in this case. 

$(2):$ \ If $d=\infty$ then, passing to the limit in \eqref{eq:solution}, we obtain
$$J_{\rn}'(\tilde{u}^{(1)})\tilde{\vp}=\irn\nabla\tilde{u}^{(1)}\nabla\tilde{\vp}-a_n\irn|\tilde{u}^{(1)}|^{p-2}\tilde{u}^{(1)}\tilde{\vp}=0,$$
where
$$J_{\rn}(u):=\frac{1}{2}\irn|\nabla u|^2-\frac{a_n}{p}\irn|u|^p$$ is the variational functional for the problem
\begin{equation}\label{eq:prob_rn}
-\Delta u=a_n|u|^{p-2}u\qquad\text{in \ }\rn.
\end{equation}
So, in this case, $\tilde{u}^{(1)}$ is a nontrivial solution to \eqref{eq:prob_rn}. 

Set
$$w_i:=u_i^{(1)}-v_i,\qquad\text{where \ \ }v_i(\exp_{\bar{x}_i}(y)):=r_i^{-\frac{n-2}{2}}\tilde{\eta}_i(y/r_i)\tilde{u}^{(1)}(y/r_i).$$
Arguing as in Step 3 of \cite[Proof of Lemma 3.2]{dhr} one shows that
$$v_i\rh 0\quad\text{weakly in \ }H^1(M),$$
and that, in case (1), i.e., when $d\in\r$,
\begin{align*}
\im|\nabla w_i|^2\dv&=\im|\nabla u_i^{(1)}|^2\dv-\int_{\mathbb{H}}|\nabla\tilde{u}^{(1)}|^2+o(1), \nonumber\\
\im|w_i|^p\dv&=\im|u_i^{(1)}|^p\dv-\int_{\mathbb{H}}|\nabla\tilde{u}^{(1)}|^p+o(1), \nonumber\\
\idm|w_i|^q\dv&=\idm|u_i^{(1)}|^q\dv-\int_{\partial\mathbb{H}}|\nabla\tilde{u}^{(1)}|^q+o(1).
\end{align*}
Therefore,
\begin{align*}
J(w_i)&=J(u_i^{(1)})-J_\mathbb{H}(\tilde{u}^{(1)})+o(1),\nonumber \\
J'(w_i)w_i&=J'(u_i^{(1)})u_i^{(1)}-J_\mathbb{H}'(\tilde{u}^{(1)})\tilde{u}^{(1)}+o(1)=o(1).
\end{align*} 
As a consequence, $J(w_i)\geq 0$ and, recalling our assumption on $c$, we derive
$$\mu_1(B_b)\leq J_\mathbb{H}(\tilde{u}^{(1)})\leq J(u_i^{(1)})+o(1)\leq J(u_i)+o(1)=c<2\mu_1(B_b).$$
In particular, $c\geq\mu_1(B_b)$, as claimed. Furthermore, Lemma \ref{lem:positive} states that $\tilde{u}^{(1)}$ does not change sign. Hence, neither does $v_i$. We claim that, after passing to a subsequence,
\begin{equation*}
\|w_i\|_\mathscr{Q}=\|u_i^{(1)}-v_i\|_\mathscr{Q}\to 0.
\end{equation*}
Indeed, if no subsequence of $(w_i)$ converges strongly to $0$, then, as $J'(w_i)w_i=o(1)$, we would have that $J(w_i)+o(1)\geq \mu_1(M)$. But then, $c\geq\mu_1(M)+\mu_1(B_b)$, contradicting our assumption. A similar argument shows that
\begin{equation*}
\|u_i^{(2)}\|_\mathscr{Q}=\|u_i-u_i^{(1)}\|_\mathscr{Q}\to 0.
\end{equation*}
Therefore, $\|u_i-v_i\|_\mathscr{Q}\to 0$ and, since $v_i\in\cP^+\cup\cP^-$, we have that $\dist_\mathscr{Q}(u_i,\cP^+\cup\cP^-)\to 0$. This completes the proof in case (1).

In case (2) the proof follows the same pattern, one needs only to observe that, for any $b\in\r$, the minimal energy $\mu_1(\sn)$ of a nontrivial solution to \eqref{eq:prob_rn} satisfies $\mu_1(\sn)>\mu_1(B_b)$, as shown in \cite[Lemma A.1]{hl}. 
\end{proof}

\bibliography{escobar}
\bibliographystyle{amsplain}

\end{document}